 \newcounter{ipotesi}
 \makeatletter \@addtoreset{equation}{section}
\newtheorem{thm}{Theorem}[section]
\newtheorem{hyp}[thm]{Hypotheses}{\rm}
\newtheorem{hyp0}[thm]{Hypothesis}{\rm}
\newtheorem{prop}[thm]{Proposition}
\newtheorem{rmk}[thm]{Remark}{\rm}
\newtheorem{example}[thm]{Example}
\newcounter{parentenv}
\newcommand{\R}{{\mathbb R}}
\newcommand{\N}{{\mathbb N}}
\newcommand{\Rd}{\mathbb R^d}
\newcommand{\Rn}{\mathbb R^n}
\newcommand{\Rm}{\mathbb R^m}
\newcommand{\Cm}{\mathbb C^m}
\newcommand{\g}{{\bm g}}
\newcommand{\f}{{\bm f}}
\newcommand{\uu}{{\bm u}}
\newcommand{\A}{\bm{\mathcal A}}
\newcommand{\vv}{{\bm v}}
\newcommand{\ww}{{\bm w}}
\renewcommand{\tilde}[1]{\widetilde{#1}}
\begin{document}

\title[Strongly coupled Schr\"odinger operators]{Strongly coupled Schr\"odinger operators in $L^p(\Rd;\Cm)$}
\author[L. Angiuli, L. Lorenzi and E.M. Mangino ]{Luciana Angiuli, Luca Lorenzi, Elisabetta M. Mangino}
\address{L.A. \& E.M.M.:  Dipartimento di Matematica e Fisica ``Ennio De Giorgi'', Universit\`a del Salento, Via per Arnesano, I-73100 LECCE, Italy}
\address{L.L.: Dipartimento di Scienze Matematiche, Fisiche e Informatiche, Plesso di Mate\-matica, Universit\`a degli Studi di Parma, Parco Area delle Scienze 53/A, I-43124 PARMA, Italy}
\email{luciana.angiuli@unisalento.it}
\email{luca.lorenzi@unipr.it}
\email{elisabetta.mangino@unisalento.it}
\thanks{This article is based upon work from COST Action CA18232 MAT-DYN-NET, supported by COST (European Cooperation in Science and Technology). The authors are also members of G.N.A.M.P.A. of the Italian Istituto Nazionale di Alta Matematica (INdAM)}
\keywords{Strongly coupled vector-valued elliptic operators,  vector-valued semigroups, Lebesgue  $L^p$-spaces}
\subjclass[2020]{35J47, 35K45, 47D06}

\begin{abstract}
We consider systems of elliptic equations, possibly coupled up to the second-order, on the $L^p(\Rd;\Cm)$-scale. Under suitable assumptions we prove that the minimal realization in $L^p(\Rd;\Cm)$ generates a strongly continuous analytic semigroup. We also prove the consistency of the semigroup on the $L^p$-scale and some spectral results.
\end{abstract}

\maketitle

\section{Introduction}

The aim of this paper is the study of the solvability of the Cauchy problem
\begin{equation}\label{intro1}
\left\{
\begin{array}{lll}
\displaystyle D_t\uu=\sum_{h,k=1}^d Q^{hk}D_{hk} \uu - V\uu, &t\in(0,\infty), &x\in\Rd,\\[4mm]
\uu(0,x)=\f(x), && x\in\Rd,
\end{array}
\right.
\end{equation}
where  $Q^{hk}$ ($h,k=1, \ldots, d$) and $V$ are $m\times m$ matrix-valued functions, $V$ is allowed to be unbounded   and $\f$ is a vector-valued function on $\Rd$ whose components belong to $L^p(\Rd)$.
The theoretical frame in which we face the problem is  semigroup theory, so  our main aim consists in proving generation results in   $L^p(\Rd;\Cm)$ for the vector-valued operator $\A$, defined on smooth vector-valued functions $\uu:\R^d\to\Cm$ by
\begin{equation}\label{operator}
\A\uu=\sum_{h,k=1}^d Q^{hk}D_{hk} \uu-V\uu=\A^0\uu-V\uu.
\end{equation}

The interest in this type of problems is two-fold. On one hand, they appear naturally in several applied  research fields such as the study of Navier-Stokes equation or of Nash equilibria in game theory, time dependent Born-Oppenheimer theory, stochastic differential equations. On the other hand, from a pure mathematical point of view, these problems turn to be quite challenging, since the techniques to tackle them are not a mere translation to the vector-valued case of those applied to the study of  linear elliptic/parabolic equations.   This feature  has been made evident  in several papers about elliptic operators with complex coefficients, which can be clearly interpreted as vector-valued operators with real coefficients. Actually, the system appearing in the first equation of \eqref{intro1} has second-order terms that can be coupled to each other and consequently, as it is proved in \cite{Kre_maz}, a maximum modulus principle fails to hold. This yields, e.g., that  the strategy adopted in \cite{AngLor Pal}, namely to take advantage of the generation results of a semigroup $T(t)$ in the setting of  bounded and continuous functions (see \cite{AALT, AL}) and to extrapolate $T(t)$ to a strongly continuous semigroup in the $L^p$-scale,  although working for a large class of operators coupled up to the first-order, cannot be applied to the operator \eqref{operator}.
 Thus, in order to prove generation results in $L^p(\Rd;\Cm)$ for  a suitable realization of the operator $\A$,  providing also an explicit description of its domain, direct methods look more appropriate. For example, one could apply  classical generation theorems, as  the Lumer-Phillips theorem. This approach worked quite satisfactorily in the case of operators coupled up to the first-order, even with unbounded diffusion coefficients (see e.g., \cite{AngLorMan1,ALMR, ALMR1}), allowing to prove generation results in $L^p(\Rd;\mathbb C^m)$ for any $p\in (1,\infty)$.  But, when dealing with operators coupled at the second-order,   $L^p$-dissipativity, thoroughly investigated in  \cite[Section 4.3]{CM}, is a critical point.  Actually,  one cannot expect dissipativity to hold true in $L^p(\Rd;\mathbb C^m)$ for any $1\leq p\leq \infty$, unless $Q^{hk}=q_{hk} I$, where $q_{hk}$ is a scalar function. By the way,
in \cite{AngLorMan}, strongly coupled Schro\"edinger type operators in divergence form have been considered and, under suitable assumptions on the (possibly unbounded)  diffusion matrices $Q^{hk}$,  regular dissipativity and generation results of analytic semigroups in $L^p(\Rd;\Cm)$ have been proved, provided that $p$ satisfies the condition
$
\left\vert \frac 1 p -\frac 1 2\right\vert\leq K,
$
where $K$ is a constant depending on the coefficients of the diffusion part. To the best of our knowledge, the latter are the  only  results in literature about   vector-valued elliptic operators coupled at the second-order with unbounded coefficients  in the $L^p$-setting.

In this  paper we are interested in getting  generation results in $L^p(\R^d;\Cm)$, which hold true for any $p\in (1,\infty)$, and we will pursue this goal by applying a Dore-Venni type result for the sum of non-commuting operators due to Monniaux and Pruss (see  Theorem \ref{M_P}).

This approach has been already used for weakly coupled operators in the $L^p$-setting in \cite{HLPRS,KLMR}. Here, assuming that the diffusion coefficients are bounded and uniformly continuous and the potential term $V$ satisfies two alternative set of assumptions, see Hypotheses \ref{H1-1} and \ref{hyp_2}, we prove that the realization $A_p$ of $\A$ in $L^p(\Rd;\Cm)$, endowed with its minimal domain $D(A_p)=\{\uu\in W^{2,p}(\Rd;\Cm):V\uu\in L^p(\Rd;\Cm)\}$ generates a strongly continuous and analytic semigroup in $L^p(\Rd;\Cm)$
for any $p\in (1,\infty)$. We point out that in the weakly coupled case (i.e., when $Q^{hk}=q_{hk}I$ for every $h,k=1,\ldots,d$ and $q_{hk}$ is a scalar function), our assumptions on the potential $V$ are less restrictive than those considered in \cite{KLMR} and allow potential matrices $V$ with entries that can grow more than quadratically, which is the best case considered in \cite{KLMR}. The consistency of the semigroups in the $L^p$-scale is proved, as well as some inclusions of the spectra of the operator $A_p$ as $p$ varies in $ (1,\infty)$. Concerning this last topic, as it is well-known already in the scalar case, the spectrum of the realization of an elliptic operator in the $L^p$-space related to the Lebesgue measure can depend on $p$. In \cite{H_V_86}, the authors provide conditions on the potential term to ensure the $p$ independence of the spectrum of the Schr\"odinger operator $\Delta-v$ in $L^p(\Rd)$. In \cite{H_V_87}, allowing rather singular potentials, the same authors provide only some inclusions of the spectra of the same operator. In an abstract setting, in \cite{HS} certain commutator conditions on the semigroup and on the resolvent of its generator are provided to ensure the $p$ independence of the spectrum. Applications include the case of Petrovskij correct systems with H\"older-continuous coefficients, Schr\"odinger operators, and certain elliptic operators in divergence form with real or complex coefficients.  More recently, the $L^p$-spectrum of the one dimensional Schr\"odinger operator $-\frac{d}{dx^2}-x^2$ with domain $C_0^\infty(\R)$ has been completely characterized in \cite{Fin_isi}, showing that it consists of the strip $\{\lambda\in\mathbb C: |{\rm Im}\lambda|\le \frac{2}{p}-1\}$, if $p \le 2$, and of the strip $\{\lambda\in\mathbb C: |{\rm Im}\lambda|\le 1- \frac{2}{p}$\}, if $p>2$.
In a more general setting, the question on the $p$-dependence of the spectrum of operators in $L^p$-spaces has been extended in various directions. For several different aspects see e.g. \cite{arendt, davies, dav_sim_tay,ji_web,sturm} and the references therein.
Further, even if the whole spectrum does not depend on $p$, this could be not the case for some of its proper subsets. For instance the point spectrum of the Laplacian operator in $L^p(\Rd)$  is empty if $p \le 2d/(d-1)$ and is equal to $(-\infty,0)$ otherwise (see \cite{ta}).

In our case, we show that the point spectrum of $A_p$ is a subset of the point spectrum of
$A_q$ and, conversely, every eigenvalue of $A_q$ belongs to the approximate spectrum of $A_p$ for any $1<p<q<\infty$. Clearly, if the spectrum of $A_p$ consists of eigenvalues only, by the previous two inclusions we deduce that $\sigma(A_p)=\sigma(A_q)$ for any $p,q \in (1,\infty)$. This is what happens, for instance, if $|V(x)\eta|\ge \psi(x)|\eta|$ for any $x \in \Rd$, $\eta \in \Rm$ and some function $\psi:\Rd \to \R$ diverging to $\infty$ as $|x|$ tends to $\infty$. Some further results are obtained on the spectrum of $A_p$ when the operator $\A$ is in divergence form and the potential matrix $V$ is symmetric.

\medskip

\paragraph{\bf Notation.}
If $d, m\in\N$ and  $\mathbb K=\mathbb R$ or $\mathbb K=\mathbb C$,  we denote  by $(\cdot, \cdot)$ and $|\cdot|$, respectively, the Euclidean inner product and the Euclidean norm in $\mathbb K^m$.
Vector-valued functions will be displayed in bold style and, given a ${\mathbb K}^m$-valued function  $\uu$,   its $k$-th component will be denoted by  $u_k$. For every $p\in [1,\infty)$,  $L^p(\Rd;\mathbb K^m)$  is the classical vector-valued Lebesgue space endowed with the norm
$\|\f\|_p=(\int_{\Rd} |\f(x)|^pdx)^{1/p}$,
while, for $k\in\mathbb N$, $W^{k,p}(\Rd, \mathbb K^m)$ is the classical vector-valued Sobolev space, i.e., the space of all functions $\uu\in L^p(\Rd;\mathbb K^m)$ whose components have distributional derivatives up to the order $k$, which belong to $L^p(\R^d;\mathbb K)$, endowed with  the usual norm
$\|\cdot\|_{k,p}$. When $\mathbb K=\R$ and $m=1$, we simply write $L^p(\Rd)$ and $W^{k,p}(\Rd)$.
If $X(\Rd;\mathbb{K}^m)$ is one of the functional spaces above,  $X_{\rm{loc}}(\Rd;\mathbb{K}^m)$ stands for  the set of functions which belong to $X(\mathcal{K};\mathbb{K}^m)$ for every compact set $\mathcal{K}\subset \Rd$.
By $C^{\infty}_c(\Rd;\mathbb K^m)$, we denote the set all the vector-valued functions which have compact support in $\Rd$ and are infinitely many times differentiable. Similarly, for every $k\in\N$,
$C^k_c(\Rd;\mathbb K^m)$ denotes the set of all the compactly supported functions $\uu:\Rd\to\mathbb K^m$ which are continuously differentiable on $\Rd$, up to the $k$-th order, while $C^k_b(\Rd;\mathbb K^m)$ is the space  of continuously differentiable functions on $\Rd$, up to the $k$-th order, which are  bounded  together with their derivatives. When $\mathbb K=\R$ and $m=1$, we simply write $C^{\infty}_c(\Rd)$, $C^k_c(\Rd)$, $C^k_b(\Rd)$.

If $X,Y$ are Banach spaces and $T:X\rightarrow Y$ is a bounded  linear operator, $\|T\|_{\mathcal L(X,Y)}$ denotes its operator norm.
If $T:D(T)\subseteq X\rightarrow Y$ is a linear closed operator, as usual, we denote by $\sigma(T)$, $P\sigma(T)$, $A\sigma(T)$ respectively its spectrum, its point spectrum and its approximate spectrum.

For any matrix $M\in \mathbb{C}^{n^2}$, we set $|M|=\|M\|_{\mathcal L(\mathbb C^n,\mathbb C^n)}$,
Finally,  $B(r)$ denotes the open ball in $\Rd$, centered at zero and with radius $r$.

\section{Preliminaries}For sake of clearness, we start by recalling some well-known notions whose definition sometimes varies depending on the point of view of the research lines. Along this paper, following the definition of Monniaux and Pr\"uss, we say that
a densely defined operator $A:D(A)\subseteq X\rightarrow X$, where $X$ is a complex Banach space,  is  \emph{sectorial} if  it is injective, with dense range, $(0,\infty)\subseteq \rho(-A)$ and $M_0=\sup_{r>0}r\|(r+A)^{-1}\|<\infty$. Moreover, $A$ is called \emph{quasi-sectorial} if $\sigma+A$ is sectorial for some $\sigma > 0$.

According to \cite{AHS}, the operator $A$ is said to be of \emph{positive type}, if it is sectorial and $0\in \rho(-A)$.

It is well known that if $A$ is sectorial, then there exists an angle $\varphi\in (0,\pi]$ such that the sector
\begin{eqnarray*}
\Sigma_{\varphi}=\{ z\in\mathbb C\, \mid\, z\not=0,\ |{\rm arg}(z)|< \varphi \}
\end{eqnarray*}
is contained in
$\rho(-A)$
and
\begin{eqnarray*}
M_\varphi=\sup_{\lambda \in \Sigma_\varphi}\|\lambda (\lambda+ A)^{-1}\|_{{\mathcal L}(X)} < \infty.
\end{eqnarray*}

Consequently, $\sigma(-A)\subseteq - \overline{\Sigma_{\pi-\varphi}}$ and, for this reason,  the spectral angle of $A$ is defined by
\begin{eqnarray*}
\varphi_A= \inf\{ \varphi\in(0,\pi)\, \mid\,\Sigma_{\pi-\varphi} \subseteq \rho(-A),\  M_{\pi-\varphi}<\infty\}.
\end{eqnarray*}

It is well known, see e.g., \cite[Theorem II.4.6]{engnagel}, that, if $A$ is a sectorial operator with spectral angle $\varphi_A<\frac \pi 2$, then $-A$ generates a bounded analytic semigroup.

For an operator of positive type, it is possible to prove the existence of an angle $\theta\in (0,\pi)$  and a positive constant $K\geq1$ such that
\begin{eqnarray}
\label{positive}
\overline{\Sigma_{\theta}}=\{ z\in\mathbb C : |{\rm arg}(z)|\leq \theta\} \subseteq \rho(-A),\qquad\;\,\sup_{\lambda \in \overline{\Sigma_\theta}}\|(1+|\lambda|) (\lambda+ A)^{-1}\|_{\mathcal{L}(X)} \leq K.
\end{eqnarray}

If we need to pay particular attention to the angle $\theta$ and the constant $K$ appearing in \eqref{positive}, we  will say that an operator belongs to the class $\mathcal P(K,\theta)$ when the conditions in \eqref{positive} are satisfied.

Consequently, if  $A\in \mathcal P (K , \theta)$, then  $A$  is sectorial with spectral angle  $\varphi_A\leq\pi- \theta$.

Before going on, we recall that for any operator $A$ of positive type, the operator $A^\alpha$,  (see \cite[Chapter 4]{LunInt} for a definition) is well defined for every $\alpha\in (0,\infty)$ and enjoy good properties as the next theorem states.

\begin{thm}\label{power_thm} Let $A$ belong to the class $\mathcal{P}(K, \theta)$ for some $K\ge 1$ and $\theta\in (0,\pi)$.
Then, the following properties are satisfied.
\begin{enumerate}[\rm(i)]
    \item
The operator $A^\alpha$ is well defined for every $\alpha>0$;
\item
each operator $A^{\alpha}$ commutes with $(\lambda+A)^{-1}$ on $D(A^\alpha)$ for every $\lambda$ in the resolvent set of the operator $-A$;
 \item
 for any $\alpha >0$ there exists a positive constant $C=C(\alpha, K, \theta)$ such that
 \begin{equation}\label{crucial}
 \|A^\alpha (\lambda+A)^{-1}\|_{{\mathcal L}(X)}\leq \frac{C}{(1+|\lambda|)^{1-\alpha}},\qquad\;\, \lambda \in\overline\Sigma_{\theta}.
 \end{equation}
 \end{enumerate}

\end{thm}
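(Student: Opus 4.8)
The statement is a packaging of classical facts about fractional powers of operators of positive type, so the strategy is to trace each assertion back to the construction of $A^\alpha$ as given in \cite[Chapter 4]{LunInt} (via, say, the Balakrishnan representation or the Dunford functional calculus), and to exploit the resolvent bound \eqref{positive} built into membership in $\mathcal{P}(K,\theta)$.

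\emph{Step 1: existence of $A^\alpha$ (item (i)).} Since $A\in\mathcal P(K,\theta)$, by the remark following \eqref{positive} the operator $A$ is of positive type, so in particular $0\in\rho(-A)$ and $A$ is injective with dense range. This is exactly the standing hypothesis under which the fractional power $A^\alpha$ is constructed in \cite[Chapter 4]{LunInt} for every $\alpha>0$; for $\alpha\in(0,1)$ one uses the Balakrishnan integral $A^\alpha x=\frac{\sin\pi\alpha}{\pi}\int_0^\infty t^{\alpha-1}(t+A)^{-1}Ax\,dt$ on $D(A)$ and extends by closure, and for general $\alpha>0$ one writes $\alpha=n+\beta$ with $n\in\N$, $\beta\in[0,1)$ and sets $A^\alpha=A^n A^\beta$. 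Thus (i) is immediate once we invoke that framework, and there is essentially nothing to prove beyond citing it.

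\emph{Step 2: commutation with the resolvent (item (ii)).} For $\lambda\in\rho(-A)$, the resolvent $(\lambda+A)^{-1}$ commutes with $A$ on $D(A)$, hence with every $(t+A)^{-1}$, $t>0$, and therefore it commutes with the Balakrishnan integral defining $A^\beta$ on $D(A)$; passing to the closure gives that $(\lambda+A)^{-1}$ maps $D(A^\beta)$ into itself and commutes with $A^\beta$ there. Iterating with the integer part yields commutation with $A^\alpha$ on $D(A^\alpha)$. Alternatively, both $A^\alpha$ and $(\lambda+A)^{-1}$ lie in the closure of the algebra generated by the resolvents of $A$, so they commute on the natural domain; this is again standard and recorded in \cite{LunInt}.

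\emph{Step 3: the mapping estimate \eqref{crucial} (item (iii)).} This is the only quantitative claim and the real content. The plan is: first treat $\alpha\in(0,1)$. Fix $\lambda\in\overline\Sigma_\theta$ and use the moment (interpolation) inequality $\|A^\alpha y\|\le C_\alpha\|y\|^{1-\alpha}\|Ay\|^\alpha$ valid for operators of positive type, applied to $y=(\lambda+A)^{-1}x$. Writing $A(\lambda+A)^{-1}=\I-\lambda(\lambda+A)^{-1}$, the bound \eqref{positive} gives $\|A(\lambda+A)^{-1}\|\le 1+|\lambda|\,\|(\lambda+A)^{-1}\|\le 1+K$, a uniform constant, while \eqref{positive} also gives $\|(\lambda+A)^{-1}\|\le K/(1+|\lambda|)$. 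Combining,
\[
\|A^\alpha(\lambda+A)^{-1}x\|\le C_\alpha\Big(\frac{K}{1+|\lambda|}\Big)^{1-\alpha}(1+K)^{\alpha}\|x\|=\frac{C}{(1+|\lambda|)^{1-\alpha}}\|x\|,
\]
which is \eqref{crucial} with $C=C(\alpha,K,\theta)$. For $\alpha\ge1$, write $\alpha=n+\beta$ and factor $A^\alpha(\lambda+A)^{-1}=A^\beta(\lambda+A)^{-1}\,[A(\lambda+A)^{-1}]^{n}\cdot(\lambda+A)^{n}(\lambda+A)^{-n}$ — more cleanly, use $A^\alpha(\lambda+A)^{-1}=A^\beta\,[A(\lambda+A)^{-1}]^{n}(\lambda+A)^{n-1}(\lambda+A)^{-n}$; the point is that each factor $A(\lambda+A)^{-1}$ is uniformly bounded and one spare resolvent supplies the decay $(1+|\lambda|)^{-1}$, so again the exponent $1-\alpha$ (now possibly negative, in which case the estimate is a growth bound) comes out, with the constant depending only on $\alpha,K,\theta$.

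\emph{Main obstacle.} The delicate point is justifying the moment inequality and the domain bookkeeping uniformly in $\lambda$ over the \emph{closed} sector $\overline\Sigma_\theta$ including $\lambda=0$: at $\lambda=0$ the estimate reads $\|A^{\alpha-1}\|_{\mathcal L(X)}\le C$, i.e.\ boundedness of $A^{\alpha-1}$ for $\alpha<1$, which uses $0\in\rho(-A)$ crucially (this is where "positive type" rather than merely "sectorial" is needed). One must also make sure $(\lambda+A)^{-1}$ actually maps $X$ into $D(A^\alpha)$ so that the left-hand side of \eqref{crucial} makes sense; this follows because $(\lambda+A)^{-1}X\subseteq D(A)\subseteq D(A^\beta)$ for $\beta<1$, and by induction $(\lambda+A)^{-1}X=D(A)\cap(\text{range})$ suffices for the integer steps. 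Once these domain and endpoint issues are handled, the rest is the elementary manipulation above.
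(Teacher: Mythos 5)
For items (i) and (ii), and for item (iii) in the range $\alpha\in(0,1)$, your argument is correct and is essentially the paper's. The authors also reduce everything to the Balakrishnan formula and to interpolating the two bounds $\|(\lambda+A)^{-1}\|_{\mathcal L(X)}\le K(1+|\lambda|)^{-1}$ and $\|(\lambda+A)^{-1}\|_{\mathcal L(X,D(A))}\le 1+2K$; they package the interpolation through the norm of $(X,D(A))_{\alpha,1}$ and \cite[Proposition 1.1.6]{LunInt}, while you use the multiplicative moment inequality $\|A^\alpha y\|\le C_\alpha\|y\|^{1-\alpha}\|Ay\|^{\alpha}$ on $y=(\lambda+A)^{-1}x$ — the same mechanism in a slightly more elementary dress, with the same dependence of the constant on $\alpha$, $K$, $\theta$.

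The paragraph of Step 3 devoted to $\alpha\ge 1$, however, does not work. The factorization you write is false: since the factors commute, $[A(\lambda+A)^{-1}]^{n}=A^{n}(\lambda+A)^{-n}$, so your product collapses to $A^{n+\beta}(\lambda+A)^{-n-1}$ rather than $A^{\alpha}(\lambda+A)^{-1}$; only one resolvent factor is available on the left-hand side of \eqref{crucial} and you cannot manufacture $n$ of them. Indeed no argument can close this case, because the estimate is false for $\alpha>1$: already for $\alpha=2$ one has $A^{2}(\lambda+A)^{-1}=A-\lambda A(\lambda+A)^{-1}$, which is unbounded whenever $A$ is (for $\alpha\in(1,2)$ a symbol computation for $A=\mu-\Delta$ gives the same conclusion). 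Note that the paper's own proof of (iii) silently restricts to $\alpha\in(0,1)$ — the Balakrishnan representation it invokes is only valid there — and only this range is used in the sequel (estimate \eqref{serve}). So the defect you inherit is really in the statement of the theorem; the fix is to restrict (iii) to $\alpha\in(0,1)$ and delete the $\alpha\ge1$ paragraph, not to repair the factorization.
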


\begin{proof}
Property (i)  follows from \cite[Chapter 3]{LunInt}.
Concerning property (ii), it suffices to prove it only for  $\alpha\in (0,1)$. In this case, thanks to the Balakrishnan formula (see e.g., \cite[formula (4.1.7)]{LunInt}), we can write
\begin{equation}\label{Balak}
A^\alpha x=\frac{1}{\Gamma(\alpha)\Gamma(1-\alpha)} A\int_0^{\infty} \xi^{\alpha-1}(\xi+A)^{-1}xd\xi, \qquad x\in D(A^\alpha).
\end{equation}
By assumptions, $\overline{\Sigma_\theta}\subseteq \rho(-A)$ and
\begin{equation*}
\|(1+|\lambda|)(\lambda+A)^{-1}\|_{{\mathcal L}(X)}\leq K,\qquad\;\,\lambda\in \overline{\Sigma_{\theta}},
\end{equation*}

Thus, for every $x\in D(A^\alpha)$ and $\lambda \in \overline{\Sigma_{\theta}}$ it follows that
\begin{align*}
(\lambda+A)^{-1}A^\alpha x&=\frac{1}{\Gamma(\alpha)\Gamma(1-\alpha)} (\lambda+A)^{-1} A\int_0^{\infty} \xi^{\alpha-1}(\xi+A)^{-1}xd\xi\\
&=\frac{1}{\Gamma(\alpha)\Gamma(1-\alpha)} A(\lambda+A)^{-1} \int_0^{\infty} \xi^{\alpha-1}(\xi+A)^{-1}xd\xi\\
&=\frac{1}{\Gamma(\alpha)\Gamma(1-\alpha)}  A\int_0^{\infty} \xi^{\alpha-1}(\lambda+A)^{-1}(\xi+A)^{-1}xd\xi\\
&=\frac{1}{\Gamma(\alpha)\Gamma(1-\alpha)}  A\int_0^{\infty} \xi^{\alpha-1}(\xi+A)^{-1}(\lambda+A)^{-1}xd\xi\\
&=A^\alpha (\lambda+A)^{-1}x.
\end{align*}

Finally, to prove estimate \eqref{crucial} in (iii), we observe that, by \eqref{Balak} and by applying \cite[Proposition 3.1.1]{LunInt}, we get that, for every $x\in (X, D(A))_{\alpha,1}$,
\begin{align}\label{pre-1}
\|A^\alpha x\|&\leq \frac{1}{\Gamma(\alpha)\Gamma(1-\alpha)} \int_0^{\infty} \xi^{\alpha-1}\|A(\xi+A)^{-1}x\|d\xi \notag\\
&=\frac{1}{\Gamma(\alpha)\Gamma(1-\alpha)}
\|\xi\mapsto \xi^\alpha A(\xi+A)^{-1}x\|_{L^1((0,\infty), \frac{d\xi}{\xi})}\notag\\
    &\leq \frac{1}{\Gamma(\alpha)\Gamma(1-\alpha)} \|x\|_{(X, D(A))_{\alpha,1}}.
\end{align}

Consequently,  it follows that
\begin{align*}
\|A(\lambda+A)^{-1}x\|&=\|-x+\lambda (\lambda+ A)^{-1}x\|
\leq \|x\| + |\lambda|\|(\lambda+A)^{-1}x\| \leq (K+1)\|x\|.
\end{align*}
Hence,
\begin{align*}
\|(\lambda+A)^{-1}x\|_{D(A)}\leq \frac{K}{|\lambda|+1}\|x\|+ (K+1)\|x\|\leq (1+2K)\|x\|.
\end{align*}

By \cite[Proposition 1.1.6]{LunInt} (applied with $X_1=X_2=Y_1=X$, $Y_2=D(A)$, $\theta=\alpha$, $p=1$), we conclude that
\begin{align*}
\|(\lambda+A)^{-1}\|_{\mathcal{L}((X, D(A))_{\alpha, 1})}\le & \|(\lambda+A)^{-1}\|^{1-\alpha}_{\mathcal L(X)} \|(\lambda+A)^{-1}\|^{\alpha}_{\mathcal L(X, D(A))}\\
\le &\bigg(\frac{K}{1+|\lambda|}\bigg)^{1-\alpha} (1+2K)^{\alpha}=:\frac{C'(\alpha)}{(1+|\lambda|)^{1-\alpha}}.
\end{align*}
Therefore, taking \eqref{pre-1} into account, we deduce
\begin{align*}
\|A^\alpha (\lambda+A)^{-1}x\| &\leq \frac{1}{\Gamma(\alpha) \Gamma (1-\alpha)}\|(\lambda+A)^{-1}x\|_{(X, D(A))_{\alpha,1}}\\
&\leq  \frac{1}{\Gamma(\alpha) \Gamma (1-\alpha)} \frac{C'(\alpha)}{(1+|\lambda|)^{1-\alpha}},
\end{align*}
whence the claim.
\end{proof}

To go further, we recall that a sectorial operator $A$ is said to admit \emph{bounded imaginary powers} if the purely imaginary powers $A^{is}$ of $A$ are uniformly bounded for $s \in [-1,1]$. In this case, $(A^{is})_{s\in\R}$ forms a strongly continuous $C_0$-group of bounded linear operators. The class of such operators is denoted by $BIP(X)$. The type $\theta_A$ of the $C_0$-group $(A^{is})_{}$ is called the power angle of $A$, i.e.
\begin{equation*}
\theta_A:=\inf \big\{\omega \ge 0 \,\,|\,\, \exists M>0\,\, \|A^{is}\|_{\mathcal{L}(X)}\le Me^{\omega|s|}, \quad s \in \R\big\}.
\end{equation*}
It is well-known that $\theta_A \ge \varphi_A$ (see \cite[Theorem 2]{Pru_Soh}).

The key instrument  for the generation results we mean to prove is a well-known result due to Monniaux and Pr\"uss about the sum of two non-commuting operators.
Since we will need to pay attention to the interrelations between the constants appearing in the statements, the spectral and power angles and the quantitative properties of the coefficients of the differential operator, we state the result by Monniaux and Pr\"uss in a slightly different way, limiting ourselves to the $L^p$-setting and providing the argument that permits to justify the clarifications in the statement.

\begin{thm}\label{M_P}
Let $1<p<\infty$ and let  $L_1,L_2$ be two closed, linear, densely defined operators in $L^p(\R^d; \mathbb{C}^m)$, which are sectorial and admit bounded imaginary  powers. Assume that $L_1$ is invertible and that  $\theta_{L_1} + \theta_{L_2} < \pi$. Further, assume that there exist angles $\gamma_1>\theta_{L_1}$ and $\gamma_2>\theta_{L_2}$ such that $\gamma_1+\gamma_2<\pi$ and
\begin{equation}
\label{commutator_0}
 \|L_1(\lambda+L_1)^{-1}[L_1^{-1}(\nu+L_2)^{-1}-(\nu+L_2)^{-1}L_1^{-1}]\|_{\mathcal{L}(L^p(\R^d; \mathbb{C}^m))}\le \frac{c}{(1+|\lambda|^{1-\beta})|\nu|^{1+\delta}}
\end{equation}
for some constants $c>0$, $0\le \beta<\delta<1$ and any $\lambda \in \Sigma_{\pi -\gamma_1}$ and $\nu \in \Sigma_{\pi-\gamma_2}$.
Then, $L_1 + L_2$, with domain $D(L_1+L_2):=D(L_1) \cap D(L_2)$, is closed and there exists a constant $\sigma_0=\sigma_0(\gamma_2)\ge 0$ such that
 $\sigma_0 + L_1+L_2$ is sectorial. Finally,  $\varphi_{\sigma_0+L_1+L_2}\le\max
 \{\gamma_1,\gamma_2\}$.
\end{thm}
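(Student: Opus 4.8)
The plan is to run, keeping careful track of the angles, the parametrix construction behind the Dore--Venni and Monniaux--Pr\"uss theorems (and I recall that $L^p(\Rd;\Cm)$ is a UMD space, so that this machinery is available). Put $\phi:=\max\{\gamma_1,\gamma_2\}$. Since the assertions on $\sigma_0+L_1+L_2$ reduce to the inclusion $\Sigma_{\pi-\phi}\subseteq\rho(-(\sigma_0+L_1+L_2))$ together with $\sup_{\mu\in\Sigma_{\pi-\phi}}\|\mu(\mu+\sigma_0+L_1+L_2)^{-1}\|<\infty$, it suffices to produce, for every $\mu\in\Sigma_{\pi-\phi}$, a bounded two-sided inverse of $\mu+\sigma_0+L_1+L_2$ on $D(L_1)\cap D(L_2)$, with $\|\mu(\mu+\sigma_0+L_1+L_2)^{-1}\|$ bounded independently of $\mu$. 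Fix $\mu$ and set $M_2:=\sigma_0+\mu+L_2$. For $\sigma_0>0$ large one first checks that $M_2$ is invertible and sectorial, that $(0,\infty)\subseteq\rho(-M_2)$, and that $M_2\in BIP(L^p(\Rd;\Cm))$ with $\theta_{M_2}\le\theta_{L_2}$ (adding a constant lying in the appropriate region does not enlarge the power angle); moreover $(t+M_2)^{-1}=(\nu+L_2)^{-1}$ with $\nu=t+\sigma_0+\mu\in\Sigma_{\pi-\gamma_2}$ for every $t>0$, so that \eqref{commutator_0} is at our disposal for the pair $(L_1,M_2)$.

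As candidate resolvent I would take the Dore--Venni integral
\[
R_\mu=\frac{1}{2\pi i}\int_{\re z=a}\frac{\pi}{\sin\pi z}\,L_1^{z-1}M_2^{-z}\,dz,\qquad a\in(0,1-\beta),
\]
which converges absolutely in $\mathcal L(L^p(\Rd;\Cm))$: by $\|L_1^{is}\|\le C_1e^{\gamma_1|s|}$ and $\|M_2^{is}\|\le C_2e^{\gamma_2|s|}$ (legitimate since $\gamma_j>\theta_{L_j}$), by the boundedness of $L_1^{z-1}$ and $M_2^{-z}$ on $\re z=a$, and by $|\pi/\sin\pi z|\le Ce^{-\pi|\im z|}$, the integrand decays like $e^{-(\pi-\gamma_1-\gamma_2)|\im z|}$ and $\gamma_1+\gamma_2<\pi$. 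The standard computation of the theory then shows that $R_\mu$ maps $L^p(\Rd;\Cm)$ into $D(L_1)\cap D(L_2)$ and that, on that domain,
\[
(\mu+\sigma_0+L_1+L_2)R_\mu=I+E_\mu,\qquad R_\mu'(\mu+\sigma_0+L_1+L_2)=I+E_\mu',
\]
$R_\mu'$ being the analogous integral with the two factors interchanged; indeed $(L_1+M_2)L_1^{z-1}M_2^{-z}=L_1^zM_2^{-z}+L_1^{z-1}M_2^{1-z}+[L_2,L_1^{z-1}]M_2^{-z}$, where the first two terms combine — via $z\mapsto z+1$, the identity $\pi/\sin\pi(z+1)=-\pi/\sin\pi z$, and a contour shift — into the residue at $z=0$ of $\pi(\sin\pi z)^{-1}L_1^zM_2^{-z}$, i.e.\ the identity, while $E_\mu=\frac{1}{2\pi i}\int_{\re z=a}\pi(\sin\pi z)^{-1}[L_2,L_1^{z-1}]M_2^{-z}\,dz$ (and similarly for $E_\mu'$).

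The hard part is to show that $\|E_\mu\|,\|E_\mu'\|\le\tfrac12$ for $\sigma_0$ large, \emph{uniformly} in $\mu\in\Sigma_{\pi-\phi}$, and this is the step I expect to be the main obstacle. Here I would represent $L_1^{z-1}$ and $M_2^{-z}$ by their Balakrishnan integrals $L_1^{z-1}=\tfrac{\sin\pi z}{\pi}\int_0^\infty s^{z-1}(s+L_1)^{-1}\,ds$ and $M_2^{-z}=\tfrac{\sin\pi z}{\pi}\int_0^\infty t^{-z}(t+M_2)^{-1}\,dt$, turning $[L_2,L_1^{z-1}]M_2^{-z}$ into an iterated integral in $s,t$; then, by elementary resolvent identities and by using the invertibility of $L_1$ to place the bounded factor $L_1(\lambda+L_1)^{-1}$ wherever an otherwise unbounded power of $L_1$ would arise, one brings the integrand into a form to which \eqref{commutator_0} applies, with $\lambda$ on a ray in $\Sigma_{\pi-\gamma_1}$ and $\nu=t+\sigma_0+\mu\in\Sigma_{\pi-\gamma_2}$. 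The conditions $0\le\beta<\delta<1$ are precisely what makes the resulting $\lambda$- and $\nu$-integrals converge absolutely (e.g.\ integrating $(1+|\lambda|^{1-\beta})^{-1}$ against the weight $s^{a-1}$ forces $a<1-\beta$), while the factor $|\nu|^{-(1+\delta)}=|t+\sigma_0+\mu|^{-(1+\delta)}$, after integration in $t$, yields a gain $\asymp|\sigma_0+\mu|^{-(a+\delta)}$. Since $|\sigma_0+\mu|\ge\sigma_0\sin\phi$ for all $\mu\in\Sigma_{\pi-\phi}$, this gain is $\le(\sigma_0\sin\phi)^{-(a+\delta)}$ uniformly in $\mu$, so a single $\sigma_0$ — which, tracking the constants, may be taken to depend only on $\gamma_2$ — makes $\|E_\mu\|,\|E_\mu'\|\le\tfrac12$ throughout the sector.

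Finally, a Neumann series makes $I+E_\mu$ and $I+E_\mu'$ invertible, so $\mu+\sigma_0+L_1+L_2$ is surjective (first identity) and injective (second), hence boundedly invertible with $(\mu+\sigma_0+L_1+L_2)^{-1}=R_\mu(I+E_\mu)^{-1}$; since $R_\mu$ maps into $D(L_1)\cap D(L_2)$ and, conversely, every $\uu\in D(L_1)\cap D(L_2)$ equals $R_\mu(I+E_\mu)^{-1}(\mu+\sigma_0+L_1+L_2)\uu$, the operator $\mu+\sigma_0+L_1+L_2$ has domain exactly $D(L_1)\cap D(L_2)$ and, being the inverse of a bounded injective operator, is closed; hence $L_1+L_2=(\mu+\sigma_0+L_1+L_2)-(\mu+\sigma_0)I$ is closed on $D(L_1)\cap D(L_2)$. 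Estimating $\|\mu R_\mu\|$ directly (using the extra decay in $|\mu|$ of $M_2^{-z}=(\sigma_0+\mu+L_2)^{-z}$) gives $\|\mu(\mu+\sigma_0+L_1+L_2)^{-1}\|\le 2\|\mu R_\mu\|\le M$ with $M$ independent of $\mu\in\Sigma_{\pi-\phi}$, so $\Sigma_{\pi-\phi}\subseteq\rho(-(\sigma_0+L_1+L_2))$ with $M_{\pi-\phi}<\infty$. Running the same construction with $\mu=0$ ($\sigma_0+L_2$ being invertible) shows $\sigma_0+L_1+L_2$ is even boundedly invertible, hence sectorial (injectivity and density of the range being immediate), and by the very definition of the spectral angle $\varphi_{\sigma_0+L_1+L_2}\le\phi=\max\{\gamma_1,\gamma_2\}$.
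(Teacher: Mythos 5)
Your plan is legitimate in outline, but it takes a genuinely different --- and far heavier --- route than the paper's. The paper does not reprove the Dore--Venni/Monniaux--Pr\"uss machinery at all: it quotes \cite[Corollary 1]{Mon_Pru}, which already yields closedness, sectoriality and the bound $\varphi_{L_1+L_2}\le\max\{\gamma_1,\gamma_2\}$ \emph{provided} the constant $c$ in \eqref{commutator_0} lies below a threshold $c_0$, and then reduces the general case to the small-$c$ case by the shift $L_2\mapsto L_2+\sigma_0$: since $\nu+\sigma_0\in\Sigma_{\pi-\gamma_2}$ and $|\nu+\sigma_0|\ge\sigma_0\sin(\min\{\gamma_2,\pi/2\})$, one may trade $\delta$ for a smaller $\delta'\in(\beta,\delta)$ and absorb the surplus decay $|\nu+\sigma_0|^{-(\delta-\delta')}$ into the constant, which becomes $c'=c\,\sigma_0^{-(\delta-\delta')}(\sin\phi_0)^{-(\delta-\delta')}<c_0$ for $\sigma_0$ large; this two-line computation is the whole proof and already explains why $\sigma_0$ depends only on $\gamma_2$ (and on the data $c,\beta,\delta$). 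You instead rebuild the parametrix $R_\mu$ from scratch and use the largeness of $\sigma_0$ directly to make the error terms $E_\mu,E_\mu'$ small --- the same mechanism (the extra room $\delta>\beta$ converted into negative powers of $\sigma_0$), exercised one level down, inside the proof of the cited theorem rather than on its hypothesis. What your version buys is self-containedness; what it costs is that the crucial steps --- the uniform bound $\|E_\mu\|,\|E_\mu'\|\le\tfrac12$ over $\Sigma_{\pi-\phi}$, the permanence of $BIP$ with unchanged power angle under the \emph{complex} shift $L_2\mapsto\sigma_0+\mu+L_2$, and the fact that $R_\mu$ maps into $D(L_1)\cap D(L_2)$ --- are exactly the content of \cite[Theorem 1 and Corollary 1]{Mon_Pru}, and in your write-up they are only sketched (``one brings the integrand into a form to which \eqref{commutator_0} applies''). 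As written, your text is therefore a plan for reproving the black box rather than a complete proof; if you are willing to cite the black box, the only thing genuinely required is the shift computation above.
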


\begin{proof}
By \cite[Corollary 1]{Mon_Pru}, there exists a constant $c_0>0$ such that, if $c<c_0$, then $L_1+L_2$ is sectorial and $\varphi_{L_1+L_2}\leq \max
 \{\gamma_1,\gamma_2\}$.
 Replacing the operator $L_2$ with $L_2+\sigma_0$, with a positive constant $\sigma_0$ and replacing $\delta$ with a smaller $\delta'\in (\beta, \delta)$,  the right-hand side of \eqref{commutator_0} can be estimated by
\begin{eqnarray*}
\frac{c}{(1+|\lambda|^{1-\beta})|\nu+\sigma_0|^{1+\delta}}\le\frac{c'}{(1+|\lambda|^{1-\beta})|\nu+\sigma_0|^{1+\delta'}}
\end{eqnarray*}
for any $\lambda \in \Sigma_{\pi-\gamma_1}$ and $\nu \in \Sigma_{\pi-\gamma_2}$
where
\begin{eqnarray*}
c'=\frac{c}{\sigma_0^{\delta-\delta'}(1-\cos^2 (\phi_0))^{\frac{\delta-\delta'}{2}}}
\end{eqnarray*}
and $\phi_0=\min\{\gamma_2,\frac{\pi}{2}\}$. Thus,
the constant $c'$ can be made smaller than $c_0$, choosing  $\sigma_0$ large enough, so that Theorem 1 in \cite{Mon_Pru} applies to the pair $(L_1,L_2+\sigma_0)$. In particular, from Corollary 1 in \cite{Mon_Pru} the estimate on the spectral angle of $\sigma_0+L_1+L_2$ follows.
\end{proof}

\section{Key tools}

In order to apply Theorem \ref{M_P} we start collecting some known facts about the  diffusion part of the operator $\A^0$, i.e., the operator defined by
\begin{eqnarray*}
(\A^0\uu)(x)= \sum_{h,k=1}^dQ^{hk}(x)D_{hk}\uu(x),\qquad\;\, x \in \R^d,
\end{eqnarray*}
on smooth functions $\uu:\Rd\to\Cm$, where, for any $x \in \R^d$, $Q^{hk}(x)=(q^{hk}_{ij}(x))_{i,j=1}^m$ are real valued $m\times m$ matrices.

Given $M>0$ and $\theta_0\in [0,\pi]$,  $-\A^0$ is said to be uniformly $(M, \theta_0)$-elliptic if
\begin{equation*}
\max_{h,k=1,\ldots,d}\|Q^{hk}\|_{\infty}\le M
\end{equation*}
and, for any $x\in\Rd$, $\xi\in \Rd$ with $|\xi|=1$, it holds that
\begin{equation*}
\sigma(A_0(x, \xi))\subseteq \overline{\Sigma}_{\theta_0}\setminus\{0\},\quad\;\,\quad\,\,|A_0(x, \xi)^{-1}|\leq M,
\end{equation*}
where $A_0(x,\xi)$ denotes the principal symbol of $\A^0$, i.e., the matrix
\begin{align*}
A_0(x, \xi)=\sum_{h,k=1}^d{Q^{hk}(x)\xi_h\xi_k}, \qquad\;\,  x\in\Rd,\;\,\xi=(\xi_1,\ldots,\xi_d)\in \Rd.
\end{align*}

For every $p\in (1,\infty)$, we now define by $A_p^0$ the realization of the operator $\A^0$ in $L^p(\Rd;\Cm)$, with domain $D(A_p^0)=W^{2,p}(\Rd;\Cm)$, i.e., the operator $A_p^0:W^{2,p}(\Rd;\Cm)\to L^p(\Rd;\Cm)$, defined by $A_p^0\uu=\A^0\uu$ for every $\uu\in W^{2,p}(\Rd;\Cm)$.

Let us prove a crucial result for our analysis.

\begin{thm}\label{thm_AHS}
Fix $p \in (1,\infty)$. Assume that the  matrices $Q^{hk}$ belong to $BUC(\Rd; \R^{2m})$ and there exist a positive constant $M$ and $\theta_0 \in [0,\pi]$ such that $-\A^0$ is uniformly $(M, \theta_0)$-elliptic. Fix $\theta \in (\theta_0, \pi)$. Then, the following properties are satisfied:
\begin{enumerate}[\rm(i)]
\item there exist constants $K \ge 1$, $\mu>0$,   (depending on $M$ and $\theta$) such that $\mu-A_p^0$ is an isomorphism from $W^{2,p}(\Rd;\Cm)$ into $L^p(\Rd;\Cm)$, which belongs to ${\mathcal P}(K,\pi-\theta)$.

\item there exists a constant $C\ge 1$ (depending on $M$ and $\theta$) such that
 \begin{equation}\label{p_a}\|(\mu-A_p^0)^{it}\|_{\mathcal{L}(L^p(\Rd;\mathbb{C}^m))}\le Ce^{\theta t}, \qquad\;\, t>0,
 \end{equation}
 and, consequently, $\theta_{\mu-A^0_p}\leq \theta_0$.
 \end{enumerate}

 \end{thm}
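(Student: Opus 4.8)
The plan is to invoke the classical theory of elliptic systems with bounded uniformly continuous coefficients, for which the relevant statements can be found in Amann--Hieber--Simonett \cite{AHS} and in the literature on $\mathcal{R}$-sectoriality / maximal regularity of such operators. I would proceed as follows.

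\emph{Step 1: Resolvent estimates and the isomorphism property.} First I would recall that, since $-\A^0$ is uniformly $(M,\theta_0)$-elliptic with $Q^{hk}\in BUC(\Rd;\R^{2m^2})$, the realization $A_p^0$ in $L^p(\Rd;\Cm)$ satisfies the standard parameter-dependent a priori estimates: there exist $\mu_0>0$ and $C>0$, depending only on $M$, $\theta$, $p$ and $d$, such that for all $\lambda$ with $|\arg(\lambda)|\le \pi-\theta$ (i.e. $-\lambda\in\overline\Sigma_\theta$ up to a sign convention) and all $\uu\in W^{2,p}(\Rd;\Cm)$,
\begin{equation*}
|\lambda|\,\|\uu\|_p + \|\uu\|_{2,p} \le C\,\|(\lambda - A_p^0)\uu\|_p,
\end{equation*}
once $|\lambda|\ge \mu_0$. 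The derivation is the usual one: localization via a partition of unity, freezing coefficients, using uniform continuity of the $Q^{hk}$ to absorb the error terms, and the constant-coefficient Fourier-multiplier estimate for the symbol $\lambda + A_0(x_0,\xi)$ (whose invertibility with uniform bounds follows precisely from the ellipticity hypothesis $\sigma(A_0(x,\xi))\subseteq\overline\Sigma_{\theta_0}\setminus\{0\}$, $|A_0(x,\xi)^{-1}|\le M$, and homogeneity). Combined with surjectivity for large real $\lambda$ (again a standard continuity-method/a priori-estimate argument), this gives that $\mu - A_p^0$ is invertible for $\mu$ large, that $\overline\Sigma_{\pi-\theta}\subseteq\rho(-A_p^0+\mu)$ after a further translation, and that $\|(1+|\lambda|)(\lambda + (\mu - A_p^0))^{-1}\|_{\mathcal L}\le K$ on $\overline\Sigma_{\pi-\theta}$. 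Choosing $\mu$ slightly larger than $\mu_0$ yields both the isomorphism onto $W^{2,p}(\Rd;\Cm)$ and membership in $\mathcal P(K,\pi-\theta)$, which is exactly (i).

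\emph{Step 2: Bounded imaginary powers with the right angle.} For (ii), I would appeal to the fact — due in this generality to Amann--Hieber--Simonett, building on Pr\"uss--Sohr — that an elliptic system of the above type on $\Rd$ (or more generally $\mathcal{R}$-sectorial elliptic boundary value problems) has bounded imaginary powers, with power angle controlled by the ellipticity angle. Concretely: the operator $\mu - A_p^0$, being of positive type with spectrum in a sector of half-angle $\pi-\theta<\pi-\theta_0$ away from the imaginary axis, admits bounded imaginary powers $(\mu - A_p^0)^{it}$, and the type of the induced $C_0$-group satisfies $\theta_{\mu - A_p^0}\le\theta_0$; equivalently, for every $\theta>\theta_0$ there is $C=C(M,\theta)\ge 1$ with $\|(\mu - A_p^0)^{it}\|_{\mathcal L(L^p(\Rd;\Cm))}\le C e^{\theta|t|}$ for all $t\in\R$ (and in particular the one-sided bound \eqref{p_a}). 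The clean way to package this is to cite \cite[Theorem ...]{AHS} applied to the operator family $\{Q^{hk}\}$ (with no lower-order terms), which asserts BIP for the shifted operator together with the stated angle bound; the dependence of $C$ on only $M$ and $\theta$ comes from the fact that the BIP bound in that theorem depends on the coefficients only through their $BUC$-norms, their modulus of continuity, and the ellipticity constants, all of which are controlled by $M$ and $\theta$ here. The implication ``$\|(\mu - A_p^0)^{it}\|\le Ce^{\theta|t|}$ for all $t\in\R$ $\Rightarrow$ $\theta_{\mu-A_p^0}\le\theta_0$'' is then immediate from the definition of the power angle by letting $\theta\downarrow\theta_0$.

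\emph{Main obstacle.} The genuinely nontrivial ingredient is Step 2 — the boundedness of the imaginary powers and, more delicately, the sharp control $\theta_{\mu-A_p^0}\le\theta_0$ of the power angle by the ellipticity angle rather than by something larger like $\pi/2$. Establishing this from scratch would require either the Pr\"uss--Sohr functional calculus machinery or the $\mathcal{R}$-boundedness / operator-valued Fourier multiplier approach (Weis-type theorems) applied uniformly along the localization, keeping track of how the sectoriality angle of the frozen symbols propagates through the construction of $(\mu - A_p^0)^{it}$ via the Dunford integral $\frac{1}{2\pi i}\int_\Gamma z^{it}(z-(\mu - A_p^0))^{-1}\,dz$ over a contour $\Gamma$ in the resolvent sector. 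Since this is precisely the content of the cited results for systems with $BUC$ coefficients, I would quote it rather than reprove it, and concentrate the written proof on checking that the hypotheses of that reference are met by our $(M,\theta_0)$-elliptic family and that its conclusions translate into the quantitative statements (i)–(ii) with constants depending only on $M$ and $\theta$.
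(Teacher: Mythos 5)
Your proposal is correct and follows essentially the same route as the paper: both reduce the theorem to the known results for elliptic systems with $BUC$ coefficients, citing \cite{AHS} for the isomorphism and the resolvent estimates in $\mathcal P(K,\pi-\theta)$, and the bounded-imaginary-powers literature (the paper uses \cite[Corollary 6.2]{Duo_Sim} together with \cite[Theorem 3]{Pru_Soh}) for estimate \eqref{p_a} and the bound $\theta_{\mu-A_p^0}\le\theta_0$. The only small point the paper spells out that you elide is that the shifts produced by the two cited results need not coincide, so one must check (via the equivalence of the graph norm with the $W^{2,p}$-norm and the stability of BIP under further shifts) that both properties persist for all larger $\mu$ before taking the maximum of the two shifts.
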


\begin{proof}
To begin with, we first recall that \cite[Theorem 9.4]{AHS} guarantees the existence of $\mu_1>0$ such that property (i) is satisfied with $\mu$ being replaced by $\mu_1$.
Since $\mu_1-A^0_p$ is an isomorphism from $W^{2,p}(\Rd;\Cm)$ into $L^p(\Rd;\Cm)$
we can show that $\|\cdot\|_{D(A^0_p)}$ is equivalent to the $W^{2,p}$-norm. Indeed, since the coefficients $Q^{hk}$ are bounded, it is immediate to estimate $\|\uu\|_{D(A^0_p)}\le C'\|\uu\|_{W^{2,p}(\Rd;\Rm)}$, for some positive constant $C'$ independent of $\uu$. On the other hand, we can estimate
\begin{align*}
\|\uu\|_{W^{2,p}(\Rd;\Cm)}&=\|(\mu_1-A^0_p)^{-1}(\mu_1-A^0_p)\uu\|_{W^{2,p}(\Rd;\Cm)}\\
&\le C'' \|(\mu_1-A^0_p)\uu\|_{L^{p}(\Rd;\Cm)}\\
&\le C''\|\uu\|_{D(A^0_p)}
\end{align*}
for some positive constants $C''$ independent of $\uu$.

By using this fact, observing that the resolvent set of $A^0_p-\mu_1$ contains the right-halfline and that $(\mu-A^0_p)^{-1}$ is continuous from $L^p(\Rd;\Cm)$ into $(D(A^0_p),\|\cdot\|_{D(A^0_p)})$ for any $\mu \ge\mu_1$, it follows that $\mu-A^0_p$ is an isomorphism from $W^{2,p}(\Rd;\Cm)$ into $L^p(\Rd;\Cm)$
for any $\mu\ge \mu_1$.

Further, thanks to \cite[Lemma 1.1]{AHS}, there exists $K(\theta)\ge 1$ such that the operator $\mu-A^0_p$ belongs to ${\mathcal P}(K(\theta),\pi-\theta)$ for any $\mu \ge \mu_1$.

Next, we note that \cite[Corollary 6.2]{Duo_Sim} yields the existence of $\mu_2>0$ such that estimate \eqref{p_a} holds true with $\mu$ being replaced by $\mu_2$. Moreover, \cite[Theorem 3]{Pru_Soh} implies that $\mu-A^0_p$ belongs to the class $BIP(L^p(\Rd; \Cm))$ with $\theta_{\mu-A^0_p} \le \theta_{\mu_2-A^0_p}$ for any $\mu\geq \mu_2$.
Consequently, in order to prove that properties in (i) and (ii) hold true with the same constant $\mu$, it is enough to take $\mu=\max\{\mu_1, \mu_2\}$.
 \end{proof}

 \begin{thm}\label{thm28}
Assume that the  matrices $Q^{hk}$ belong to $BUC(\Rd; \R^{m^2})$  and satisfy the Legendre-Hadamard ellipticity condition, namely  there exists a positive constant $C_1$ such that
\begin{equation}
\label{son}
{\rm Re}(A_0(x,\xi)\eta, \eta)\geq C_1|\eta|^2
\end{equation}
for any $x,\xi\in \Rd$ with $|\xi|=1$ and $\eta\in\Cm$.
Then, there exists a positive constant $\mu$ (depending on $m,d, C_1$ and the sup-norm of the coefficients $Q^{hk}$) such that  the following properties are satisfied:
\begin{enumerate}[\rm(i)]
\item
$\mu-A_p^0$ is sectorial with spectral angle $\varphi_{\mu-A_p^0}<\frac{\pi}{2}$. In particular, $\mu-A_p^0$ is closed and $0\in\rho(\mu-A_p^0)$;
\item
$\mu-A_p^0$ belongs to $BIP(L^p(\Rd;\mathbb{C}^m))$ and $\theta_{\mu-A_p^0}<\frac{\pi}{2}$;
\item
$\mu-A_p^0$ is invertible and $(\mu-A_p^0)^{-1}\in \mathcal{L}(L^p(\Rd;\mathbb{C}^m));W^{2,p}(\Rd;\mathbb{C}^m))$.
\end{enumerate}
\end{thm}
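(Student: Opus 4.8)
The plan is to derive Theorem \ref{thm28} from Theorem \ref{thm_AHS} by observing that the Legendre--Hadamard condition \eqref{son} is precisely the statement that $-\A^0$ is uniformly $(M,\theta_0)$-elliptic for a suitable $M$ and some $\theta_0<\frac\pi2$. First I would check that \eqref{son} implies, for every unit vector $\xi\in\Rd$, that the matrix $A_0(x,\xi)\in\R^{m^2}$ has numerical range contained in the closed right half-plane at distance $C_1$ from the imaginary axis: indeed $\mathrm{Re}(A_0(x,\xi)\eta,\eta)\ge C_1|\eta|^2$ for all $\eta\in\Cm$. Combined with the uniform bound $|A_0(x,\xi)|\le d^2\max_{h,k}\|Q^{hk}\|_\infty=:C_2$ coming from the boundedness of the coefficients, a standard computation shows that the spectrum of $A_0(x,\xi)$ lies in the truncated sector $\{z:\mathrm{Re}\,z\ge C_1,\ |z|\le C_2\}$, hence in $\overline\Sigma_{\theta_0}\setminus\{0\}$ with $\theta_0:=\arccos(C_1/C_2)<\frac\pi2$, and moreover $|A_0(x,\xi)^{-1}|\le M$ for a constant $M=M(C_1,C_2)$: the resolvent bound follows since for any eigenvalue $z$ one has $|z|\ge C_1$, and more carefully one uses that for a matrix with numerical range in $\{\mathrm{Re}\ge C_1\}$ one has $\|A_0(x,\xi)^{-1}\|\le C_1^{-1}$. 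Thus $-\A^0$ is uniformly $(M,\theta_0)$-elliptic with $\theta_0<\frac\pi2$.

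Once this is established, I would fix an angle $\theta\in(\theta_0,\frac\pi2)$ and apply Theorem \ref{thm_AHS} with this choice. Part (i) of that theorem provides constants $K\ge1$ and $\mu>0$ (depending only on $M$ and $\theta$, hence ultimately on $m,d,C_1$ and $\max_{h,k}\|Q^{hk}\|_\infty$) such that $\mu-A_p^0$ is an isomorphism from $W^{2,p}(\Rd;\Cm)$ onto $L^p(\Rd;\Cm)$ and belongs to $\mathcal P(K,\pi-\theta)$. By the remark in the Preliminaries, membership in $\mathcal P(K,\pi-\theta)$ means $\mu-A_p^0$ is sectorial with spectral angle $\varphi_{\mu-A_p^0}\le\pi-(\pi-\theta)=\theta<\frac\pi2$; moreover $0\in\rho(\mu-A_p^0)$ because $1+|\lambda|$ times the resolvent is bounded on the full closed sector $\overline\Sigma_{\pi-\theta}\ni0$, so in particular $\mu-A_p^0$ is of positive type and closed. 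This gives (i). For (iii), the isomorphism statement in Theorem \ref{thm_AHS}(i) says exactly that $\mu-A_p^0$ is invertible with $(\mu-A_p^0)^{-1}\in\mathcal L(L^p(\Rd;\Cm);W^{2,p}(\Rd;\Cm))$.

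For part (ii), I would invoke Theorem \ref{thm_AHS}(ii): estimate \eqref{p_a} gives $\|(\mu-A_p^0)^{it}\|_{\mathcal L(L^p)}\le Ce^{\theta t}$ for $t>0$, and by taking adjoints (or by the analogous bound for $t<0$) one obtains uniform boundedness of $(\mu-A_p^0)^{it}$ for $t\in[-1,1]$, so $\mu-A_p^0\in BIP(L^p(\Rd;\Cm))$ with power angle $\theta_{\mu-A_p^0}\le\theta_0<\frac\pi2$. This completes all three items with a single constant $\mu$.

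The routine part is the linear-algebra lemma turning \eqref{son} into the uniform $(M,\theta_0)$-ellipticity of $-\A^0$ with $\theta_0<\frac\pi2$; the only genuine subtlety is making sure the resolvent bound $|A_0(x,\xi)^{-1}|\le M$ and the sector bound hold \emph{uniformly} in $x$ and $\xi$, which is immediate here since $C_1$ is uniform by hypothesis and $C_2$ is uniform by the $BUC$ assumption. No new analytic obstacle arises: everything else is a direct quotation of Theorem \ref{thm_AHS} together with the definitions of sectoriality, positive type and $BIP$ recalled in the Preliminaries.
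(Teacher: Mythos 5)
Your proposal is correct and follows essentially the same route as the paper: reduce the statement to the uniform $(M,\theta_0)$-ellipticity of $-\A^0$ with $\theta_0<\frac{\pi}{2}$ by elementary linear algebra on the principal symbol, then quote Theorem \ref{thm_AHS}. The only (harmless) difference is in the bound on $|A_0(x,\xi)^{-1}|$: the paper uses the adjugate formula to get $c_m(dC_2)^{m-1}C_1^{-m}$, whereas your numerical-range argument $\|A_0(x,\xi)u\|\ge \mathrm{Re}(A_0(x,\xi)u,u)/\|u\|\ge C_1\|u\|$ gives the cleaner constant $C_1^{-1}$ — and you rightly discard the insufficient "eigenvalues have modulus $\ge C_1$" shortcut, which alone would not bound the inverse of a non-normal matrix.
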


\begin{proof}
Thanks to Theorem \ref{thm_AHS}, the proof reduces to proving that $-\mathcal \A^0$ is uniformly $(M,\theta_0)$-elliptic for some $M>0$ and $\theta_0 \in [0,\pi/2)$. In this case, since $\varphi_{\mu-A_p^0}\le \theta_{\mu-A_p^0}\le \theta_0$, the assertions (i)-(iii) follow immediately.

We first show that, for any $x,\xi\in \Rd$ with $|\xi|=1$,
the spectrum of $A_0(x,\xi)$ is contained in $\Sigma_{\theta}$, where
$\theta=\arctan \left(\frac{m^2C_2}{C_1}\right)$ and $C_2=\max_{h,k=1,\ldots,d}\|Q^{hk}\|_{\infty}$.
For this purpose, we begin by observing that $|A_0(x,\xi)| \le d C_2|\xi|^2$ for any $x,\xi\in \Rd$.
Now, let us fix $x,\xi\in \Rd$ with $|\xi|=1$ and   $\lambda \in \sigma(A_0(x,\xi))$. Then, there exists $u\in \mathbb C^m$, with $|u|=1$, such that $\lambda u=A_0(x,\xi)u$. Multiplying scalarly by $u$ the previous equality and taking the real part, by \eqref{son} we deduce that ${\rm Re}(\lambda)\geq C_1.$
Moreover, $|{\rm Im}(\lambda)|\leq |\lambda|=|(A_0(x,\xi)u,u)|\leq |(A_0(x,\xi))|\le d C_2$.
Hence,
\begin{align*}
|{\rm arg}(\lambda) | = \left\vert\arctan\bigg (\frac{{\rm Im}\lambda}{{\rm Re}\lambda}\bigg )\right\vert =\arctan\bigg (\left\vert\frac{{\rm Im}\lambda}{{\rm Re}\lambda} \right\vert\bigg )  \leq \arctan \left(\frac{dC_2}{C_1}\right).
\end{align*}

Since $\sigma(A_0(x,\xi))\subset \{z \in \mathbb{C}: |z|\ge C_1\}$, by the formula for the inverse of a matrix, we deduce that there exists a positive constant $c_m$ (depending only on $m$) such that
\begin{align*}
|(A_0(x,\xi))^{-1}|\leq c_m|A_0(x,\xi)|^{m-1}C_1^{-m}\le c_m(d C_2)^{m-1}C_1^{-m}.
\end{align*}
Consequently, setting $M=\max\{C_2, c_m(dC_2)^{m-1}C_1^{-m}\}$ and  $\theta_0=\arctan\left( \frac{d C_2}{C_1}\right)$ we get the claim.
\end{proof}

Now, we focus our attention to the potential term of $\A$ proving some relevant properties of its realization in $L^p(\Rd;\Cm)$. To this aim, for any $p \in (1,\infty)$ and any matrix-valued function $V:\Rd\to \R^{m^2}$, we define the operator $V_p$ on $L^p(\Rd;\mathbb{C}^m)$ by setting $D(V_p)=\{\uu\in L^p(\Rd;\mathbb{C}^m):V\uu\in L^p(\Rd;\mathbb{C}^m)\}$ and $V_p \uu:=V\uu$.

\begin{thm}\label{finale_V}
Let assume that $V:\Rd\rightarrow \R^{m^2}$ is a matrix-valued function with entries in $W^{2,\infty}_{{\rm loc}}(\Rd)$ satisfying
\begin{equation}
(V(x)\eta,\eta) \ge c_0|\eta|^2,\qquad\;\,x\in\Rd,\;\,\eta\in\R^m,
\label{hyp-c0-prop}
\end{equation}
for some positive constant $c_0$.
Further, assume that there exists $\omega_0\in [0,\frac{\pi}{2})$ such that
\begin{equation}\label{ana_V-prop}
|{\rm Im}(V(x)\eta, \eta)| \leq \tan(\omega_0) {\rm Re}(V(x)\eta, \eta), \qquad\;\, x\in\Rd,\;\, \eta\in\Cm.
\end{equation}
Then, the following properties are satisfied:
\begin{enumerate}[\rm (i)]
\item
$\rho(-V_p)$ contains $\Sigma_{\pi-\omega_0}$ and,
for every $\omega'\in (\omega_0,\pi)$, there exists a positive constant $C_{\omega'}$ such that
 \begin{equation}\label{res}
 \|(\lambda +V_p)^{-1}\|_{\mathcal{L}(L^p(\Rd;\mathbb{C}^m))}\le \frac{C_{\omega'}}{1+|\lambda|},\qquad\;\,\lambda\in\Sigma_{\pi-\omega'}\cup\{0\};
 \end{equation}
 \item
the operator $V_p$ belongs to $BIP(L^p(\Rd;\mathbb{C}^m))$ and satisfies the estimate
\begin{eqnarray*}
\|V_p^{is}\|_{\mathcal{L}(L^p(\Rd;\Cm))}\le e^{\omega_0|s|}, \qquad\; \, s \in \R.
\end{eqnarray*}
In particular, the power angle $\theta_{V_p}$ does not exceed $\omega_0$.
\end{enumerate}
\end{thm}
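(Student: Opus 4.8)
The plan is to treat $V_p$ as a multiplication operator and to exploit the pointwise numerical-range information encoded in \eqref{hyp-c0-prop} and \eqref{ana_V-prop}. First I would prove (i). Fix $\lambda\in\Sigma_{\pi-\omega'}\cup\{0\}$ and $x\in\Rd$. The matrix $V(x)$ has numerical range contained in the closed sector $\overline{\Sigma}_{\omega_0}$ by \eqref{ana_V-prop}, and moreover $\mathrm{Re}\,(V(x)\eta,\eta)\ge c_0|\eta|^2$ by \eqref{hyp-c0-prop}; hence the numerical range of $V(x)$ is contained in the translated sector $c_0+\overline{\Sigma}_{\omega_0}$ (intersected with $\{\mathrm{Re}\ge c_0\}$), so it stays at positive distance from $-\overline{\Sigma_{\pi-\omega'}}$. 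A standard numerical-range estimate then gives, for $\mu$ outside the closed numerical range of a matrix $B$, the bound $|(\mu-B)^{-1}|\le (\mathrm{dist}(\mu,\overline{W(B)}))^{-1}$. Applying this with $B=-V(x)$ and $\mu=\lambda$, and computing the distance from $\lambda$ to $-(c_0+\overline{\Sigma}_{\omega_0})$, I get a bound of the form $|(\lambda+V(x))^{-1}|\le C_{\omega'}(1+|\lambda|)^{-1}$ uniformly in $x$. Since $V_p$ acts as pointwise multiplication by $V(x)$, this yields that $(\lambda+V_p)^{-1}$ is bounded multiplication by $(\lambda+V(x))^{-1}$, so $\lambda\in\rho(-V_p)$ with the same norm bound, proving \eqref{res}. (One should also check $V_p$ is densely defined and closed, which is routine for multiplication operators, using $C_c^\infty(\Rd;\Cm)\subseteq D(V_p)$ since the entries of $V$ are locally bounded.)

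For (ii), the strategy is again pointwise: for each $x$, the matrix $V(x)$ is invertible with numerical range in $c_0+\overline{\Sigma}_{\omega_0}\subseteq\overline{\Sigma}_{\omega_0}$, so the complex power $V(x)^{is}$ is well defined via the holomorphic functional calculus, $V(x)^{is}=\frac{1}{2\pi i}\int_\Gamma z^{is}(z-V(x))^{-1}\,dz$ over a suitable contour $\Gamma$ enclosing the spectrum. The key is the bound $|V(x)^{is}|\le e^{\omega_0|s|}$ uniformly in $x$. I would obtain this from the sectoriality estimate just proved: since $V(x)\in\mathcal P(K',\pi-\omega_0-\eps)$-type bounds hold uniformly, one can invoke the scalar/finite-dimensional version of the bounded-imaginary-powers estimate for sectorial operators whose numerical range lies in a sector of half-angle $\le\omega_0$ — concretely, the estimate $\|A^{is}\|\le C e^{\omega_0|s|}$ valid for any bounded invertible operator $A$ with $\overline{W(A)}\subseteq\overline{\Sigma}_{\omega_0}$ on a Hilbert space (here $\Cm$), with $C=1$ because on Hilbert space the sectoriality constant of such an $A$ with respect to the half-plane can be taken optimal. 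Then $(V_p^{is}\uu)(x)=V(x)^{is}\uu(x)$ defines a bounded operator on $L^p(\Rd;\Cm)$ with $\|V_p^{is}\|_{\mathcal L(L^p)}\le\sup_x|V(x)^{is}|\le e^{\omega_0|s|}$. Measurability of $x\mapsto V(x)^{is}$ follows from the contour-integral representation and continuity of the entries of $V$; strong continuity in $s$ and the group law follow pointwise and then pass to $L^p$ by dominated convergence. This gives $V_p\in BIP(L^p(\Rd;\Cm))$ with $\theta_{V_p}\le\omega_0$.

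The main obstacle I anticipate is \emph{getting the constant exactly right}, i.e.\ the sharp bound $\|V_p^{is}\|\le e^{\omega_0|s|}$ with multiplicative constant $1$ rather than some $C e^{\omega_0|s|}$. For a general sectorial operator one only gets $\|A^{is}\|\le C_\eps e^{(\omega_0+\eps)|s|}$, and squeezing out the clean exponent requires using that $\Cm$ is a Hilbert space, where operators with numerical range in $\overline{\Sigma}_{\omega_0}$ enjoy the dilation/functional-calculus bound with constant $1$ (von Neumann-type inequality for sectorial operators on Hilbert space, or a direct estimate via the Stein interpolation / three-lines argument on the analytic family $z\mapsto V(x)^z$ between $z=0$, where the operator is the identity, and $\mathrm{Re}\,z=\pm$ something). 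I would carry out the three-lines argument pointwise in $x$: the function $z\mapsto V(x)^{z}$ is holomorphic on a strip, bounded on the boundary lines by $1$ and by a controlled quantity, and interpolation pins down the value on the imaginary axis. The rest — density, closedness, measurability, and the passage from the pointwise matrix estimates to operator-norm estimates on $L^p$ — is routine and I would only sketch it.
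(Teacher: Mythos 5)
Your proposal is correct and follows essentially the same route as the paper: for (i) the paper also reduces to the pointwise resolvent bound for the sectorial matrices $V(x)$ (your numerical-range version of that bound is in fact the safer one, since for non-normal matrices the distance to the spectrum alone does not control the resolvent, whereas the distance to the numerical range does, and the hypotheses \eqref{hyp-c0-prop}--\eqref{ana_V-prop} are precisely numerical-range conditions); for (ii) the paper likewise obtains $|V(x)^{is}|\le e^{\omega'|s|}$ uniformly in $x$ from the Hilbert-space fact that an m-accretive operator has imaginary powers bounded by $e^{\pi|s|/2}$ with constant $1$ (Haase, Cor.~7.1.8), applied to the rotated matrices $e^{\pm i(\pi/2-\omega')}V(x)$ together with the identity $(e^{\pm i(\pi/2-\omega')}V(x))^{is}=e^{\mp(\pi/2-\omega')s}V(x)^{is}$, which is exactly the rotation step you invoke implicitly and which the paper verifies by a contour-integral computation. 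The only caveat is that your alternative three-lines argument, as sketched (claiming boundedness by $1$ of $V(x)^z$ on the boundary lines), would not work as stated, but your primary route does not rely on it.
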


\begin{proof}
(i) It is immediate to check that for every $x\in\Rd$, $\sigma(V(x))=P\sigma(V(x))\subset \overline{\Sigma_{\omega_0}} \cap \{z\in\mathbb{C}: {\rm Re} z\geq c_0\}$ and, consequently, $\Sigma_{\pi-\omega_0}\subseteq \rho(-V(x))$. Moreover, fix $\omega'\in \left (\omega_0,\frac{\pi}{2}\right )$ and $\lambda=|\lambda|e^{\pm i\theta}\in\Sigma_{\pi-\omega'}$. If $\theta\in [0,\frac\pi 2-\omega_0]$, then  $d(\lambda,\sigma(-V(x)))\ge |\lambda|$, whereas, if
$ \theta\in (\frac \pi 2-\omega_0, \pi-\omega']$, then
\begin{eqnarray*}
d(\lambda,\sigma(-V(x)))\ge d(\lambda, -\Sigma_{\omega_0}) \ge |\lambda|\sin (\omega'-\omega_0),
\end{eqnarray*}
where $d(\lambda,S)$ denotes the distance of $\lambda$ from the set $S$.
Consequently,
\begin{equation}\label{stimV2}
|(\lambda+V(x))^{-1}| \le \frac{1}{|\lambda| \sin (\omega'-\omega_0)}.
\end{equation}


Now, we fix $\f\in L^p(\R^d;\Cm)$, $\lambda\in\Sigma_{\pi-\omega'}$ for some $\omega'>\omega_0$ and consider the equation
$\lambda\uu+V_p\uu=\f$. We note for almost every $x\in\Rd$, it holds that $\uu(x)=(\lambda+V(x))^{-1}\f$.
From \eqref{stimV2} it follows that
\begin{eqnarray*}
|(\lambda+V(x))^{-1}\f(x)|\le \frac{1}{|\lambda|\sin(\omega'-\omega_0)}|\f(x)|   \end{eqnarray*}
for almost every $x\in\Rd$, so that
the function $(\lambda+V)^{-1}\f$ belongs to $L^p(\Rd;\Cm)$ and
\begin{eqnarray*}
\|\uu\|_{L^p(\Rd;\Cm)}\leq \frac{\|\f\|_{L^p(\Rd;\Cm)}}{|\lambda| \sin (\omega'-\omega_0)}, \qquad\;\, \lambda\in\Sigma_{\pi-\omega'}.
\end{eqnarray*}
The arbitrariness of $
\omega'>\omega_0$ implies that
all the sector $\Sigma_{\pi-\omega_0}$ lies in the resolvent set of the operator $-V_p$.

To complete the proof of \eqref{res}, it is enough to observe that $0\in \rho(-V(x))$ and $|V(x)^{-1}|\leq \frac{1}{d(0, \sigma(-V(x)))} \leq \frac{1}{c_0}$.
Arguing as above, we get that $0\in\rho(-V_p)$. Since the resolvent set is open, it contains a ball $B(r)$. Moreover, the resolvent operator is uniformly bounded in such a ball, since the function $\lambda\mapsto R(\lambda,-V_p)$ is continuous in $\rho(-V_p)$. Noticing that the function $\lambda\mapsto\dfrac{|\lambda|+1}{|\lambda|}$ is bounded in $\Sigma_{\pi-\omega'}\setminus B(r)$, from all the above results, estimate \eqref{res} follows.

(ii) We begin by observing that, since for any $x \in \Rd$ and any $\omega'\in (\omega_0,\frac{\pi}{2})$, $V(x)$ is m-$\omega'$ accretive (see \cite[Pag. 173]{Haas}), then $e^{\pm i (\frac{\pi}{2}-\omega')} V(x)$ is m-accretive (see \cite[Proposition 7.1.1]{Haas}). Consequently, applying \cite[Corollary 7.1.8]{Haas}, with $A=e^{\pm i (\frac{\pi}{2}-\omega')} V(x)$,  we obtain
\begin{equation*}
|(e^{  i (\frac{\pi}{2}-\omega')} V(x))^{is}|\le e^{-\frac{\pi}{2}s}, \qquad\; \, s<0
\end{equation*}
and
\begin{equation*}
|(e^{ -i (\frac{\pi}{2}-\omega')} V(x))^{is}|\le e^{\frac{\pi}{2}s}, \qquad\; \, s>0.
\end{equation*}
If we prove that
\begin{equation}\label{molt}
(e^{ \pm i(\frac{\pi}{2}-\omega')} V(x))^{is}=e^{\mp(\frac{\pi}{2}-\omega')s} (V(x))^{is}, \qquad\;\, x \in \Rd, \;\,s \in \Rm,
\end{equation}
then we get that
\begin{equation*}
e^{  \mp ( \frac{\pi}{2}-\omega')s} |V(x))^{is}|\le e^{\frac{\pi}{2}|s|}, \qquad\; \, s\in\R.
\end{equation*}
Hence,
\begin{equation*}
| (V(x))^{is}|\le e^{\pm \big(\frac\pi 2 -\omega'\big) s +  \frac{\pi}{2}|s|}, \qquad\; \, s\in\R
\end{equation*}
and, by dividing the cases in which $s>0$ and $s<0$, we get that
\begin{equation}\label{1'}
|(V(x))^{is}|\le e^{\omega'|s|}, \qquad\; \, s\in\mathbb R.
\end{equation}

To prove the equality \eqref{molt}, we observe that  $\sigma(V(x))$ is a compact subset of $\Sigma_{\omega'} \cap \{ |z|>c'\}$ with $0<c'<c_0$. Let $U\subseteq \Sigma_{\omega'} \cap \{ z\in\mathbb C: |z|>c'\}$ be a bounded open neighborhood of    $\sigma(V(x))$ with smooth, positively oriented boundary $+\partial U$.
Then $e^{\pm i(\frac \pi 2 - \omega')}U $ is a bounded open neighborhood of  $ e^{\pm i(\frac \pi 2 - \omega')}\sigma(V(x))=\sigma(e^{\pm i(\frac \pi 2 - \omega')}V(x))$ with smooth, positively oriented boundary $e^{\pm i(\frac \pi 2 - \omega')}(+\partial U)$ and such that $e^{\pm i(\frac \pi 2 - \omega')}U \subseteq \Sigma_{\frac \pi 2} \cap\{z\in\mathbb C: |z|>c'\} \subseteq\mathbb C\setminus (-\infty,0]$. Consequently, since the function $z\mapsto z^{is}$ is holomorphic in $\mathbb C\setminus (-\infty,0]$, by the Dunford-Schwartz functional calculus (see e.g. \cite[Sec. VII.3]{DS}), it follows that

\begin{align*}
(e^{\pm i(\frac \pi 2 - \omega')}V(x))^{is}=&\frac{1}{2\pi i}\int_{e^{\pm i(\frac \pi 2 -\omega')}(+\partial U)} \lambda^{is} (\lambda-e^{\pm i(\frac \pi 2 - \omega')}V(x))^{-1}d\lambda\\
=&\frac{e^{\mp i(\frac \pi 2 -\omega')}}{2\pi i}\int_{e^{\pm i(\frac \pi 2 -\omega')}(+\partial U)} \lambda^{is} (\lambda e^{\mp i(\frac \pi 2 -\omega')}-V(x))^{-1}d\lambda .
\end{align*}
By performing the  change of variable $z=e^{\mp i(\frac \pi 2 -\omega')}\lambda$ and taking into account that  $-\frac \pi 2 <{\rm arg}(z)\pm (\frac\pi 2 -\omega')<\frac\pi 2$, we get that
\begin{align*}
(e^{\pm i(\frac \pi 2 - \omega')}V(x))^{is}=\frac{1}{2\pi i}\int_{+\partial U} \big(z e^{\pm i(\frac \pi 2 -\omega')}\big)^{is} (z-V(x))^{-1}dz=
e^{\mp (\frac \pi 2 -\omega')s} \big(V(x)\big)^{is}.
\end{align*}

Formula \eqref{molt} is proved.

Finally, we observe that, since $((V_p)^{is}\f)(x)=(V(x))^{is}\f(x)$, for any $\f\in L^p(\Rd;\Cm)$  and almost every $x\in\Rd$, from \eqref{1'} we infer that
\begin{eqnarray*}
\|(V_p)^{is}\|_{\mathcal{L}(L^p(\Rd;\mathbb{C}^m))}\le e^{\omega'|s|}
\end{eqnarray*}
for any $s\in \R$, whence $\theta_{V_p}\le\omega_0$.
\end{proof}

\section{The generation result in $L^p(\Rd; \mathbb{C}^m)$}

Here, for every $p \in (1,\infty)$ we set $A_p:=A_p^0-V_p$ and show that the operator $-A_p$,
defined on $D(A_p):=D(A_p^0)\cap D(V_p)$, generates a strongly continuous and analytic semigroup in $L^p(\R^d;\Cm)$. This is done under the following assumptions on the matrix-valued functions $Q^{hk}$.

\begin{hyp0}\label{H1}
For any $h,k=1, \ldots,d$ the matrices $Q^{hk}$ belong to $BUC(\Rd;\R^{m^2})$ and satisfy the Legendre-Hadamard ellipticity condition (see \eqref{son}).
\end{hyp0}

As far as the matrix-valued function $V$ is concerned, we assume one of the following alternative sets of hypotheses.

\begin{hyp}\label{H1-1}
\begin{enumerate}[\rm (i)]
\item
$V:\Rd\rightarrow \R^{m^2}$ is a matrix-valued function with entries in $W^{2,\infty}_{{\rm loc}}(\Rd)$ and there exists a positive constant $c_0$ such that
\begin{equation}
(V(x)\eta,\eta) \ge c_0|\eta|^2,\qquad\;\,x\in\Rd,\;\,\eta\in\R^m.
\label{hyp-c0}
\end{equation}
Moreover, there exists $\omega_0\in [0,\frac{\pi}{2})$ such that
\begin{equation}\label{ana_V}
|{\rm Im}(V(x)\eta, \eta)| \leq \tan(\omega_0) {\rm Re}(V(x)\eta, \eta), \qquad\;\, x\in\Rd,\;\, \eta\in\Cm;
\end{equation}
\item
$VQ^{hk}=Q^{hk}V$ for every $h,k\in\{1, \dots, d\}$ and
there exists $\alpha\in (0,1)$ such that  $(D_kV)V^{-\alpha}$ and $(D_{hk}V)V^{-\alpha}$ are bounded
 for any $h,k=1, \dots, d$.
\end{enumerate}
\end{hyp}

\begin{hyp}\label{hyp_2}
\begin{enumerate}[\rm(i)]
\item
There exist a function $v\in W^{2, \infty}_{\rm loc}(\Rd;\R)$, with positive infimum $v_0$, and $\alpha \in (0,1)$ such that $v^{-\alpha} D_jv$ and $v^{-\alpha}D_{ij}v$ are bounded in $\Rd$ for any $i,j=1, \ldots,d$;
\item
$V:\Rd\rightarrow \R^{m^2}$ is a matrix-valued function with entries in $L^\infty_{{\rm loc}}(\Rd)$ such that $|v_{ij}(x)|\le c (v(x))^{\gamma}$ for any $i,j=1, \ldots,d$, with $i\neq j$, and $|v_{ii}(x)-v(x)|\le c (v(x))^{\gamma}$ for any $i=1, \ldots,d$, $x \in \Rd$, some positive constant $c$ and some $\gamma \in (0,1)$.
\end{enumerate}
\end{hyp}

\begin{thm}\label{main}
Assume Hypothesis $\ref{H1}$ and either Hypotheses $\ref{H1-1}$ or $\ref{hyp_2}$. Then, for every $p \in (1,\infty)$ the operator $(A_p, D(A_p))$ generates a strongly continuous and analytic semigroup in $L^p(\R^d;\Cm)$. Moreover, there exist two positive constants $c_1$ and $c_2$ such that
\begin{align}\label{norm_eq}
c_1\big (\|\uu\|_{W^{2,p}(\Rd;\Cm)}+\|\uu\|_{D(V_p)}\big )\le \|\uu\|_{D(A_p)}\le c_2\big (\|\uu\|_{W^{2,p}(\Rd;\Cm)}+\|\uu\|_{D(V_p)}\big )
\end{align}
for any $\uu\in D(A_p)$.
\end{thm}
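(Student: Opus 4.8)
The plan is to apply Theorem \ref{M_P} with $L_1 = \mu - A^0_p$ and $L_2 = V_p$ for a suitable large $\mu > 0$, and then undo the shift. Under Hypothesis \ref{H1}, Theorem \ref{thm28} provides $\mu > 0$ such that $L_1 = \mu - A^0_p$ is sectorial, invertible with $(\mu - A^0_p)^{-1} \in \mathcal L(L^p(\Rd;\Cm); W^{2,p}(\Rd;\Cm))$, belongs to $BIP(L^p(\Rd;\Cm))$ with $\theta_{L_1} < \pi/2$. Under either Hypothesis \ref{H1-1}(i) or (in the setting of Hypothesis \ref{hyp_2}, after checking \eqref{hyp-c0-prop}--\eqref{ana_V-prop} hold for the relevant comparison potential) Theorem \ref{finale_V} gives that $L_2 = V_p$ is sectorial, belongs to $BIP(L^p(\Rd;\Cm))$ with $\theta_{V_p} \le \omega_0 < \pi/2$. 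Hence $\theta_{L_1} + \theta_{L_2} < \pi$, and one may pick angles $\gamma_1 > \theta_{L_1}$, $\gamma_2 > \theta_{L_2}$ still summing to less than $\pi$. Once the commutator estimate \eqref{commutator_0} is verified, Theorem \ref{M_P} yields $\sigma_0 \ge 0$ with $\sigma_0 + L_1 + L_2 = \sigma_0 + \mu + A_p$ sectorial, $D(L_1 + L_2) = W^{2,p}(\Rd;\Cm) \cap D(V_p) = D(A_p)$, and $\varphi_{\sigma_0 + \mu + A_p} \le \max\{\gamma_1,\gamma_2\} < \pi/2$; by \cite[Theorem II.4.6]{engnagel}, $-(\sigma_0 + \mu) - A_p$ generates a bounded analytic semigroup, hence $-A_p$ generates a strongly continuous analytic (but not necessarily bounded) semigroup, rescaling by $e^{(\sigma_0+\mu)t}$.

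The main obstacle is verifying the commutator bound \eqref{commutator_0}, i.e. controlling
\[
\big\|L_1(\lambda+L_1)^{-1}\big[L_1^{-1}(\nu+V_p)^{-1} - (\nu+V_p)^{-1}L_1^{-1}\big]\big\|_{\mathcal L(L^p(\Rd;\Cm))}
\le \frac{c}{(1+|\lambda|^{1-\beta})|\nu|^{1+\delta}}.
\]
Here I would use that $L_1^{-1} = (\mu - A^0_p)^{-1}$, and compute the commutator $[(\mu - A^0_p)^{-1}, (\nu + V_p)^{-1}]$. The standard identity $[R, S] = R(S^{-1} - R^{-1})S$ type manipulation — or more directly, writing $(\mu - A^0_p)^{-1}(\nu + V_p)^{-1} - (\nu + V_p)^{-1}(\mu - A^0_p)^{-1} = (\nu + V_p)^{-1}\big[(\nu + V_p)(\mu - A^0_p)^{-1} - (\mu - A^0_p)^{-1}(\nu + V_p)\big](\nu + V_p)^{-1}$ — reduces matters to commuting $A^0_p$ with $V_p$. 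Under Hypothesis \ref{H1-1}(ii), $V$ commutes with each $Q^{hk}$, so the commutator $[A^0_p, V]\uu = \sum_{h,k} Q^{hk}(D_{hk}(V\uu) - V D_{hk}\uu) = \sum_{h,k} Q^{hk}\big((D_{hk}V)\uu + (D_hV)D_k\uu + (D_kV)D_h\uu\big)$, a first-order operator whose coefficients are controlled by $(D_{hk}V)V^{-\alpha}$ and $(D_kV)V^{-\alpha}$ times $V^\alpha$; under Hypothesis \ref{hyp_2} the scalar comparison function $v$ plays the same role. The powers of $V_p$ (equivalently $V^\alpha$) absorbed this way are then handled via Theorem \ref{power_thm}(iii) applied to $V_p$ (which is of positive type by Theorem \ref{finale_V}(i)): estimates of the form $\|V_p^\alpha (\nu + V_p)^{-1}\| \le C(1+|\nu|)^{\alpha - 1}$, together with the $W^{2,p}$-boundedness of $(\mu - A^0_p)^{-1}$ and the $L^p$-contractivity-type bound $\|L_1(\lambda + L_1)^{-1}\| \le K + 1$ from Theorem \ref{power_thm}, produce a decay $|\nu|^{-1-\delta}$ in $\nu$ (with $\delta$ related to $1-\alpha$, or to $1-\gamma$ under Hypothesis \ref{hyp_2}) and the required $(1+|\lambda|^{1-\beta})^{-1}$ factor (in fact one gets $\beta = 0$). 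Care must be taken that all operator compositions act on the correct domains and that the fractional powers of $V_p$ commute with the resolvents, which is Theorem \ref{power_thm}(ii).

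Finally, the norm equivalence \eqref{norm_eq} follows once $A_p$ is known to be closed with $D(A_p) = W^{2,p}(\Rd;\Cm) \cap D(V_p)$: the inclusion $\|\uu\|_{D(A_p)} \le c_2(\|\uu\|_{W^{2,p}} + \|\uu\|_{D(V_p)})$ is immediate from boundedness of the coefficients $Q^{hk}$ (so $\|A^0_p\uu\|_p \lesssim \|\uu\|_{W^{2,p}}$) together with $\|V_p\uu\|_p = \|\uu\|_{D(V_p)} - \|\uu\|_p \le \|\uu\|_{D(V_p)}$. For the reverse inequality, since $\sigma_0 + \mu + A_p$ is invertible (being sectorial and, after a further shift if needed, of positive type) with bounded inverse, and since both $(\mu - A^0_p)^{-1}$ maps $L^p$ into $W^{2,p}$ boundedly and $V_p(\nu + V_p)^{-1}$ is bounded, one writes $\uu = (\sigma_0+\mu+A_p)^{-1}\f$ with $\f = (\sigma_0+\mu+A_p)\uu$ and estimates $\|\uu\|_{W^{2,p}} + \|V_p\uu\|_p \le C\|\f\|_p \le C'\|\uu\|_{D(A_p)}$; this is exactly the closed-graph-type consequence of Theorem \ref{M_P} that $D(L_1+L_2)$ carries a norm equivalent to $\|\cdot\|_{D(L_1)} + \|\cdot\|_{D(L_2)}$, which here is $\|\cdot\|_{W^{2,p}} + \|\cdot\|_{D(V_p)}$ by Theorem \ref{thm_AHS}. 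I expect this last part to be routine relative to the commutator estimate.
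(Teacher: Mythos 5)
Your argument for the case of Hypotheses \ref{H1-1} is essentially the paper's: apply Theorem \ref{M_P} to $L_1=\mu-A_p^0$ and $L_2=V_p$, reduce the commutator bound \eqref{commutator_0} to the commutator of $\A^0$ with $(\nu+V)^{-1}$ (a first-order operator with coefficients $D_kV(\nu+V)^{-1}$, $D_{hk}V(\nu+V)^{-1}$, thanks to $VQ^{hk}=Q^{hk}V$), absorb the derivatives of $V$ via $(D_kV)V^{-\alpha}$ and Theorem \ref{power_thm}(iii) to get decay $|\nu|^{-(2-\alpha)}$, and obtain \eqref{norm_eq} by an open-mapping/closed-graph argument. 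That part is sound.

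The genuine gap is your treatment of Hypotheses \ref{hyp_2}. Under those hypotheses the matrix $V$ itself does \emph{not} satisfy the assumptions of Theorem \ref{finale_V} or of your commutator computation: its entries are only in $L^\infty_{\rm loc}$ (not $W^{2,\infty}_{\rm loc}$), there is no accretivity or sectoriality condition \eqref{hyp-c0-prop}--\eqref{ana_V-prop} on $V$ (only on the scalar comparison function $v$), and no commutation $VQ^{hk}=Q^{hk}V$ is assumed. So one cannot take $L_2=V_p$ in Theorem \ref{M_P}, and saying that ``$v$ plays the same role'' only produces a generation result for the auxiliary operator $\A^0-vI$ (whose potential $vI$ does satisfy Hypotheses \ref{H1-1}), not for $\A^0-V$. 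The missing step is a perturbation argument: write $V=vI+W$ with $|W|\le c\,v^{\gamma}$, show via Young's inequality and \eqref{norm_eq} for $\A^0-vI$ that multiplication by $W$ is relatively bounded with respect to the realization of $\A^0-vI$ with relative bound zero, and invoke the standard perturbation theorem for generators of analytic semigroups. One must also then identify the domain, i.e.\ prove $\{\uu\in W^{2,p}:v\uu\in L^p\}=\{\uu\in W^{2,p}:V\uu\in L^p\}$ with equivalent norms, which again uses the splitting $V=vI+W$. Without this two-step argument the theorem is only proved under Hypotheses \ref{H1-1}.
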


\begin{proof}
We split the proof into three steps. In the first two steps, we assume Hypotheses \ref{H1-1} on $V$ and in Step 3, we replace these assumptions with Hypotheses \ref{hyp_2}.

{\em Step 1}. Here, we prove that
\begin{align}\label{muro}
&[A_p^0(\nu+V_p)^{-1}-(\nu+V_p)^{-1}A_p^0]\uu\notag\\
=&-\sum_{h,k=1}^d(\nu+V_p)^{-1}(Q^{hk}+Q^{kh})D_kV (\nu+V_p)^{-1}D_h\uu \notag\\
 &+\sum_{h,k=1}^d(\nu+V_p)^{-1}(Q^{hk}+Q^{kh}) D_hV(\nu+V_p)^{-1}D_kV (\nu+V_p)^{-1} \uu\notag\\
 &-\sum_{h,k=1}^d(\nu+V_p)^{-1}Q^{hk}D_{hk}V(\nu+V_p)^{-1} \uu
\end{align}
for any $\uu\in W^{2,p}(\Rd;\mathbb{C}^m)$ and $\nu\in \Sigma_{\pi-\omega_0}$.

We start considering a matrix-valued function $M:\Rd\rightarrow \R^{m^2}$ with entries in $W^{2,\infty}_{\rm loc}(\Rd)$ such that $MQ^{hk}=Q^{hk}M$ for any $h,k=1, \dots, d$. Then,
\begin{align}
[\A^0M-M\A^0]\uu=&\sum_{h,k=1}^dQ^{hk}D_{hk}(M\uu) - \sum_{h,k=1}^dM(Q^{hk}D_{hk}\uu)\notag\\
=&\sum_{h,k=1}^dQ^{hk}(D_{hk}M\uu+ D_kM D_h\uu + D_hM D_k\uu) +  \sum_{h,k=1}^d(Q^{hk}M-MQ^{hk})D_{hk}\uu\notag\\
=&\sum_{h,k=1}^d(Q^{hk}+Q^{kh})D_kMD_h\uu+ \sum_{h,k=1}^dQ^{hk}D_{hk}M\uu.
\label{prel}
\end{align}
\noindent
Now, let us fix $\nu \in \Sigma_{\pi-\omega_0}$. In order to apply the previous equality with $M=(\nu+V)^{-1}$, we observe that, by differentiating the equality $I=(\nu+V)(\nu+V)^{-1}$, we obtain that
\begin{align*}
&D_h(\nu+V)^{-1}=-(\nu+V)^{-1} D_hV (\nu+V)^{-1}\\
 &D_{hk}(\nu+V)^{-1}
 =(\nu+V)^{-1}\big [D_hV(\nu+V)^{-1}D_kV- D_{hk}V +
D_kV(\nu+V)^{-1}D_hV\big ](\nu+V)^{-1}.
\end{align*}
Consequently, applying \eqref{prel}, with $M$ being replaced by $(\nu+V)^{-1}$, and observing that $Q^{hk}(\nu+V)^{-1}=(\nu+V)^{-1}Q^{hk}$, since $Q^{hk}V=VQ^{hk}$ for every $h,k=1,\ldots,d$, we deduce that
\begin{align*}
 &[\A^0(\nu+V)^{-1}-(\nu+V)^{-1}\A^0]\uu\\
 =&-\sum_{h,k=1}^d(Q^{hk}+Q^{kh})(\nu+V)^{-1}D_kV (\nu+V)^{-1}D_h\uu\\
 &+\sum_{h,k=1}^dQ^{hk}(\nu+V)^{-1}\Big( D_hV(\nu+V)^{-1}D_kV-D_{hk}V +
D_kV(\nu+V)^{-1}D_hV\Big) (\nu+V)^{-1} \uu.
\end{align*}
To conclude the proof of \eqref{muro} it suffices to show that $(\mu+V_p)^{-1}(W^{2,p}(\Rd;\Cm))\subset W^{2,p}(\Rd;\Cm)$. Thus, let us fix $\f\in W^{2,p}(\Rd;\Cm)$, $i,j\in\{1,\ldots,d\}$ and observe that
\begin{align}\label{der_1}
D_j[(\nu+V_p)^{-1}\f]=-(\nu+V)^{-1}D_j V(\nu+V)^{-1}\f+(\nu+V)^{-1}D_j \f
\end{align}
and
\begin{align}\label{der_2}
&D_{ij}[(\nu+V_p)^{-1}\f]\notag\\
=& (\nu+V)^{-1}\big[D_iV(\nu+V)^{-1}D_jV- D_{ij}V +
D_jV(\nu+V)^{-1}D_iV\big ] (\nu+V)^{-1}\f\notag\\
+&(\nu+V)^{-1}D_{ij}\f-(\nu+V)^{-1}D_iV(\nu+V)^{-1}D_j \f-(\nu+V)^{-1}D_jV(\nu+V)^{-1}D_i \f.
\end{align}
Using the assumptions on $V$, Theorems \ref{power_thm}(iii) and \ref{finale_V}, we
can show that $D_j(\nu+V_p)^{-1}\f$ belongs to $L^p(\Rd;\Cm)$ and
 \begin{align}\label{serve}
 \|D_jV(\nu+V_p)^{-1}\|_{\mathcal{L}(L^p(\Rd;\Cm))}&= \|D_j V V^{-\alpha}V^{\alpha}(\nu+V_p)^{-1}\|_{\mathcal{L}(L^p(\Rd;\Cm))}\notag\\
 &\le C\|V_p^\alpha (\nu+V_p)^{-1}\|_{\mathcal{L}(L^p(\Rd;\Cm))}\notag\\
 &\le \frac{C(\gamma,\alpha)}{(1+|\nu|)^{1-\alpha}}
 \end{align}
 for any $\nu \in \Sigma_{\pi-\gamma}$, any $\gamma>\varphi_{V_p}$ and some positive constant $C(\gamma,\alpha)$.
 Arguing similarly, we can prove that
 \begin{equation}\label{serve1}
\|D_{ij}V(\nu+V_p)^{-1}\|_{\mathcal{L}(L^p(\Rd;\mathbb{C}^m))}\le \frac{C'(\gamma,\alpha)}{(1+|\nu|)^{1-\alpha}}
 \end{equation}
 for any $\nu$ and $\gamma$ as above and some positive constant $C'(\gamma,\alpha)$,
 which together with \eqref{der_2} and \eqref{serve}, yields that $D_{ij}(\nu+V_p)^{-1}\f \in L^p(\Rd;\Cm)$.

{\em Step 2}. Here, to complete the proof in this case, we apply Theorem \ref{M_P} with $L_1=\mu-A^0_p=:B_p^\mu$, where $\mu$ is given by Theorem \ref{thm28}, and $L_2=V_p$.
Thanks to Theorems \ref{thm28} and \ref{finale_V} it is immediate to check that $B_p^\mu$ and $V_p$ satisfy all the assumptions in Theorem \ref{M_P} but the commutator estimate \eqref{commutator_0} that we prove here below. To this purpose, we fix $\gamma_1 \in (\theta_{B_p^{\mu}}, \frac{\pi}{2})$, $\gamma_2 \in (\theta_{V_p},\frac{\pi}{2})$ and prove that
\begin{equation}
\label{commutator}
\|C(\lambda, \nu)\|_{\mathcal{L}(L^p(\Rd;\Cm))}\le \frac{c}{(1+|\lambda|)|\nu|^{2-\alpha}},\qquad\;\, \lambda \in \Sigma_{\pi -\gamma_1},\,\;\nu \in \Sigma_{\pi-\gamma_2}
\end{equation}
for some constants $c>0$ and $\alpha$ given by Hypothesis \ref{H1-1}(ii),
where
\begin{align*}
C(\lambda, \nu):=B_p^\mu(\lambda+B_p^\mu)^{-1}[(B_p^\mu)^{-1}(\nu+V_p)^{-1}-(\nu+V_p)^{-1}(B_p^\mu)^{-1}].
\end{align*}

Let us observe that, for any $\f\in L^p(\Rd;\Cm)$ it holds that
\begin{align*}
C(\lambda, \nu)\f&=B_p^\mu(\lambda+B_p^\mu)^{-1}(B_p^\mu)^{-1}[(\nu+V_p)^{-1}B_p^\mu-B_p^\mu(\nu+V_p)^{-1}](B_p^\mu)^{-1}\f\\
&=(\lambda+B_p^\mu)^{-1}[(\nu+V_p)^{-1}B_p^\mu-B_p^\mu(\nu+V_p)^{-1}](B_p^\mu)^{-1}\f\\
&= (\lambda+B_p^\mu)^{-1}[A_p^0(\nu+V_p)^{-1}-(\nu+V_p)^{-1}A_p^0](B_p^\mu)^{-1}\f.
\end{align*}
Thanks to \eqref{muro}, we deduce that
\begin{align*}
&C(\lambda, \nu)\f\\
=&-(\lambda+B_p^\mu)^{-1}(\nu+V_p)^{-1}\sum_{h,k=1}^d(Q^{hk}+Q^{kh})D_kV (\nu+V_p)^{-1}D_h(B_p^\nu)^{-1}\f \notag\\
 &+(\lambda+B_p^\mu)^{-1}(\nu+V_p)^{-1}\bigg(\sum_{h,k=1}^d(Q^{hk}+Q^{kh}) D_hV(\nu+V_p)^{-1}D_kV\bigg) (\nu+V_p)^{-1}(B_p^\mu)^{-1}\f\\
 & -(\lambda+B_p^\mu)^{-1}(\nu+V_p)^{-1}\bigg(\sum_{h,k=1}^dQ^{hk}D_{hk}V\bigg)(\nu+V_p)^{-1}(B_p^\mu)^{-1}\f\\
 =&\!:\sum_{i=1}^3C_i(\lambda, \nu)\f.
 \end{align*}

Taking Theorems \ref{thm28}, \ref{finale_V} and estimate \eqref{serve} into account and using the boundedness of the coefficients $Q^{hk}$, we can estimate
 \begin{align}
&\|C_1(\lambda,\nu)\f\|_{L^p(\Rd;\Cm)}\notag\\
\le &\|(\lambda+B_p^{\mu})^{-1}\|_{\mathcal{L}(L^p(\Rd;\Cm))}\|(\nu+V_p)^{-1}\|_{\mathcal{L}(L^p(\Rd;\Cm))}\notag\\
&\qquad\times\sum_{h,k=1}^d\|Q^{hk}+Q^{kh}\|_{\infty}\|D_hV(\nu+V_p)^{-1}\|_{\mathcal{L}(L^p(\Rd;\Cm))}
 \|(B_p^{\mu})^{-1}\f\|_{W^{1,p}(\Rd;\Cm)}\notag\\
\le &\frac{c_1}{(1+|\nu|)^{2-\alpha}(1+|\lambda|)}\|\f\|_{L^p(\Rd;\Cm)}
\label{estim-C1}
 \end{align}
 for every $\lambda\in\Sigma_{\pi-\gamma_1}$, every $\nu\in \Sigma_{\pi-\gamma_2}$ and some positive constant $c_1$ independent of $\lambda$ and $\nu$.
 In a similar way, taking also \eqref{serve1} into account, we can estimate
 \begin{align}
\|C_2(\lambda, \nu)\f\|_{L^p(\Rd;\Cm)}\le \frac{c_2}{(1+|\nu|)^{3-2\alpha}(1+|\lambda|)}\|\f\|_{L^p(\Rd;\Cm)}
\label{estim-C2}
\end{align}
and
 \begin{align}
\|C_3(\lambda, \nu)\f\|_{L^p(\Rd;\Cm)}\le \frac{c_3}{(1+|\nu|)^{2-\alpha}(1+|\lambda|)}\|\f\|_{L^p(\Rd;\Cm)}
\label{estim-C3}
\end{align}
for the same values of $\lambda$ and $\nu$ as above and some positive constants $c_2$ and $c_3$ independent of $\lambda$ and $\nu$.
From \eqref{estim-C1}-\eqref{estim-C3}, estimate \eqref{commutator} follows.

Applying Theorem \ref{M_P} with $\beta=0$ and $\delta=1-\alpha$, we conclude that the operator $-A_p$ is quasi-sectorial and the spectral angle of the operator $-B_p^{\mu}+V_p$ does not exceed the maximum between $\gamma_1$ and $\gamma_2$. Since both these constants are less than $\frac{\pi}{2}$, the spectral angle is itself less than $\frac{\pi}{2}$, so that the operator $A_p$ generates an analytic semigroup in $L^p(\Rd;\Cm)$, which is also strongly continuous since $D(A_p)$ is dense in $L^p(\Rd;\Cm)$.

To conclude this step, we prove \eqref{norm_eq}. The first inequality is immediate. To prove the other inequality, we first observe that, since $A_p$ is a closed operator, $(D(A_p),\|\cdot\|_{D(A_p)})$ is a Banach space. Analogously, the closedeness of $A^0_p$ and $V_p$, together with the equivalence of the norms of $D(A^0_p)$ and the classical $W^{2,p}(\Rd;\Cm)$-norm proved in the proof of Theorem \ref{thm_AHS}, imply that $(D(A_p), \|\cdot\|_{W^{2,p}(\Rd;\Cm)}+\|\cdot\|_{D(V_p)})$ is a Banach space too. 
The open mapping theorem, applied to the identity operator from $ (D(A_p), \|\cdot\|_{W^{2,p}(\Rd;\Cm)}+\|\cdot\|_{D(V_p)})$ and $(D(A_p), \|\cdot\|_{D(A_p)})$ yields the second estimate in \eqref{norm_eq}.

{\em Step 3.} Here, we assume that $V$ satisfies Hypotheses \ref{hyp_2}. First of all, we observe that the matrix-valued function $vI$ satisfies Hypotheses \ref{H1-1}. Consequently, by Steps 1 and 2, the realization $L_p$ of $\A^0-vI$ in $L^p(\Rd;\Cm)$, with domain $D_p=\{\uu \in W^{2,p}(\Rd;\Cm): v\uu \in L^p(\Rd;\Cm)\}$, generates a strongly continuous analytic semigroup.

Next, we prove that the operator $ B:D_p\to L^p(\Rd;\Cm)$, defined by $B\uu=-V\uu+v\uu$ for any $\uu \in D_p$, is $L_p$-bounded with $L_p$-bound equal to $0$. Once this property is proved, applying \cite[Theorem 2.10]{engnagel} we conclude that the realization $A_p$ of the operator $\A$ in $L^p(\Rd;\Cm)$, with domain $D_p$, generates a strongly continuous analytic semigroup.

For this purpose, let us fix $\uu \in D_p$. By using Hypotheses \ref{hyp_2}(ii) we can find a constant $\tilde{c}>0$ and, for any $\varepsilon>0$,
 a positive constant $C_{\varepsilon,\gamma}$ such that
\begin{equation*}
\|B\uu\|_{L^p(\Rd;\Cm)}\le  \tilde{c}\|v^{\gamma}\uu\|_{L^p(\Rd;\Cm)}
\le  \varepsilon \|v\uu\|_{L^p(\Rd;\Cm)}+C_{\varepsilon,\gamma}\|\uu\|_{L^p(\Rd;\Cm)}.
\end{equation*}
Using \eqref{norm_eq} we can go further in the previous estimate to infer that
\begin{align*}
\|B\uu\|_{L^p(\Rd;\Cm)}\le & \frac{\varepsilon}{c_1}\|\uu\|_{D(A_p^0-vI)} +C_{\varepsilon,\gamma}\|\uu\|_{L^p(\Rd;\Cm)}\\
\le &\frac{\varepsilon}{c_1}\|(A_p^0-v)\uu\|_{L^p(\Rd;\Cm)}+ \bigg(C_{\varepsilon,\gamma}+\frac{\varepsilon}{c_1}\bigg)\|\uu\|_{L^p(\Rd;\Cm)}
\end{align*}
and this shows that the operator $B$ is $L_p$-bounded with $L_p$-bound equal to $0$.

Finally, we show that $D_p=\{\uu\in W^{2,p}(\Rd;\Cm): V\uu\in L^p(\Rd;\Cm)\}$. For this purpose, it suffices to prove that $\uu\in D(V_p)$ if and only if $v\uu\in L^p(\Rd;\Cm)$ and
there exist two positive constants $K_1$ and $K_2$ such that
\begin{equation}
K_1\|v\uu\|_{L^p(\Rd;\Cm)}\le \|\uu\|_{D(V_p)}\le K_2\|v\uu\|_{L^p(\Rd;\Cm)}
\label{annalisa}
\end{equation}
for every $u\in D(V_p)$. The assumptions on $V$ in Hypotheses \ref{hyp_2} allow us to split
$V=vI+W$, where the entries of the matrix-valued function $W$ are functions whose moduli can be bounded from above in terms of the function $v^{\gamma}$. Therefore,
\begin{eqnarray*}
|v\uu|\le |V\uu|+|W\uu|\le |V\uu|+cv^{\gamma}|\uu|
\end{eqnarray*}
for some positive constant $c$. Now, using Young inequality, we can estimate
$cv^{\gamma}\le \frac{1}{2}v+c'$ so that
\begin{eqnarray*}
|v\uu|\le |V\uu|+\frac{1}{2}v|\uu|+c'|\uu|
\end{eqnarray*}
and the first part of estimate \eqref{annalisa} follows with $K_1=(\max\{2,2c'\})^{-1}$.

Arguing similarly, we can estimate
\begin{eqnarray*}
|V\uu|\le v|\uu|+|W\uu|\le v|\uu|+ cv^{\gamma}|\uu|
\le c_*v|\uu|+c_{**}|\uu|\le (c_*+c_{**}v_0^{-1})v|\uu|
\end{eqnarray*}
for some positive constants $c_*$ and $c_{**}$, so that the second part of \eqref{annalisa} follows with $K_2=c_*+(c_{**}+1)v_0^{-1}$.

Clearly, estimate \eqref{norm_eq} holds true also in this situation, since the proof provided in the last part of Step 2 does not depend on the form of the potential term $V$.
\end{proof}

\section{Further results}
In this section we assume that the hypotheses of Theorem \ref{main} are always satisfied.

\begin{prop}
$C^\infty_c(\Rd;\Cm)$ is a core for $A_p$.
\end{prop}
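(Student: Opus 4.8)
The plan is to show that $C^\infty_c(\Rd;\Cm)$ is dense in $D(A_p)$ with respect to the graph norm $\|\cdot\|_{D(A_p)}$, which by Theorem \ref{main} (specifically \eqref{norm_eq}) is equivalent to the norm $\|\uu\|_{W^{2,p}(\Rd;\Cm)}+\|V\uu\|_{L^p(\Rd;\Cm)}$. So it suffices to approximate an arbitrary $\uu\in D(A_p)$ simultaneously in $W^{2,p}(\Rd;\Cm)$ and in $D(V_p)$ by compactly supported smooth functions. I would do this in two stages: first truncate to reduce to compactly supported functions, then mollify to gain smoothness.

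\emph{Stage 1 (truncation).} Fix a cut-off function $\vartheta\in C^\infty_c(\Rd)$ with $\vartheta\equiv 1$ on $B(1)$, $\supp\vartheta\subseteq B(2)$, $0\le\vartheta\le 1$, and set $\vartheta_n(x)=\vartheta(x/n)$. Given $\uu\in D(A_p)$, put $\uu_n=\vartheta_n\uu$. Then $\uu_n\to\uu$ in $W^{2,p}(\Rd;\Cm)$ by the standard product-rule estimate: $D_h\uu_n=\vartheta_n D_h\uu+n^{-1}(D_h\vartheta)(\cdot/n)\uu$ and $D_{hk}\uu_n=\vartheta_n D_{hk}\uu + n^{-1}[(D_h\vartheta)(\cdot/n)D_k\uu+(D_k\vartheta)(\cdot/n)D_h\uu]+n^{-2}(D_{hk}\vartheta)(\cdot/n)\uu$, and each of the terms carrying a negative power of $n$ tends to $0$ in $L^p$ while $\vartheta_n D^\beta\uu\to D^\beta\uu$ by dominated convergence. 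For the potential term, $V\uu_n=\vartheta_n V\uu$ (here $V$ acts pointwise, so it commutes with multiplication by the scalar $\vartheta_n$), and since $V\uu\in L^p(\Rd;\Cm)$ we get $V\uu_n\to V\uu$ in $L^p$ by dominated convergence. Hence $\uu_n\to\uu$ in $D(A_p)$, and we may assume $\uu$ has compact support.

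\emph{Stage 2 (mollification).} Let $\uu\in D(A_p)$ with $\supp\uu\subseteq B(R)$, and let $\rho_\eps$ be a standard mollifier. The functions $\uu*\rho_\eps$ are in $C^\infty_c(\Rd;\Cm)$, with supports in a fixed ball, and $\uu*\rho_\eps\to\uu$ in $W^{2,p}(\Rd;\Cm)$ as $\eps\to 0$ by the classical mollification theorem (since $\uu\in W^{2,p}$). The remaining point is $V(\uu*\rho_\eps)\to V\uu$ in $L^p$. Here the hard part arises: $V$ need not be bounded and does not commute with convolution, so one cannot simply write $V(\uu*\rho_\eps)=(V\uu)*\rho_\eps$. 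I would handle this by localization: since all the functions involved are supported in the fixed ball $B(R+1)$, and $V$ has entries in $W^{2,\infty}_{\rm loc}(\Rd)$ (under Hypotheses \ref{H1-1}) or in $L^\infty_{\rm loc}(\Rd)$ (under Hypotheses \ref{hyp_2}), $V$ is \emph{bounded} on $B(R+1)$, say $|V(x)|\le C_R$ there. Then
\begin{align*}
\|V(\uu*\rho_\eps)-V\uu\|_{L^p(\Rd;\Cm)}=\|V(\uu*\rho_\eps-\uu)\|_{L^p(B(R+1);\Cm)}\le C_R\|\uu*\rho_\eps-\uu\|_{L^p(\Rd;\Cm)}\longrightarrow 0.
\end{align*}
Combining the two stages, $C^\infty_c(\Rd;\Cm)$ is dense in $D(A_p)$ for the graph norm, i.e.\ it is a core for $A_p$. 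The only genuine subtlety, as indicated, is the non-commutativity of $V$ with the mollification, which is neutralized by the fact that truncation has already confined everything to a ball on which $V$ is bounded; once that is observed, the argument is routine.
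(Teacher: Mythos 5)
Your proof is correct and follows essentially the same route as the paper's: reduce to the equivalent norm $\|\cdot\|_{W^{2,p}}+\|V\cdot\|_{L^p}$ via \eqref{norm_eq}, truncate with cut-offs, then mollify. You in fact supply the detail the paper leaves implicit, namely that after truncation $V$ is bounded on a fixed ball, which is exactly why the convolution step goes through.
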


\begin{proof}
In view of the estimate \eqref{norm_eq}, we start proving that
 for any function  $\uu \in W^{2,p}(\Rd; \Cm)$ such that $V\uu\in L^p(\Rd;\Cm)$ there exists a sequence $(\vv_n)\subset W^{2,p}(\Rd;\Cm)$ with compact support such that $\|\vv_n-\uu\|_{W^{2,p}(\Rd;\Cm)}+\|V\vv_n-V\uu\|_{L^p(\Rd;\Cm)}$ vanishes as $n$ tends to $\infty$. Let us fix $\uu \in D(A_p)$ and a standard sequence $(\theta_n)$ of smooth cut-off functions. Then, the sequence $(\vv_n)$ defined by
$\vv_n:=\uu \theta_n$ for any $n \in \N$, has the required properties. To conclude the proof it suffices to approximate any compactly supported function in $W^{2,p}(\Rd;\Cm)$ by  functions in $C^\infty_c(\Rd;\Cm)$. This can be easily obtained regularizing the functions by convolution.
\end{proof}

For any $p\in (1,\infty)$, we denote by $(T_p(t))$ the semigroup generated by the operator $A_p$ in $L^p(\Rd;\Cm)$ and we prove the consistency of such semigroups on the $L^p$-scale.

\begin{prop}\label{prop-4.2}
For every $p,q\in (1,\infty)$,  the semigroups $(T_p(t))$ and $(T_q(t))$ coincide on $L^p(\Rd;\Cm)\cap L^q(\Rd;\Cm)$.
\end{prop}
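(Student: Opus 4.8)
The plan is to prove consistency by a resolvent-identity argument, which is the standard route for such statements. First I would fix $p,q\in(1,\infty)$ and, by symmetry, assume $p<q$. Recall from the proof of Theorem~\ref{main} that, after shifting, $\mu+\sigma_0-A_p$ is sectorial with spectral angle less than $\pi/2$; in particular there is a real number $\lambda_0$ (independent of $p$, since it depends only on the quantitative data of $Q^{hk}$ and $V$ through Theorems~\ref{thm28} and~\ref{finale_V}) such that $\lambda_0\in\rho(A_p)$ for every $p\in(1,\infty)$. The key step is then to show that the resolvents are consistent, i.e.
\begin{equation*}
(\lambda_0-A_p)^{-1}\f=(\lambda_0-A_q)^{-1}\f,\qquad \f\in L^p(\Rd;\Cm)\cap L^q(\Rd;\Cm).
\end{equation*}
Once this is established, the consistency of the semigroups follows from the exponential formula $T_p(t)\f=\lim_{n\to\infty}\big(\tfrac{n}{t}(\tfrac{n}{t}-A_p)^{-1}\big)^n\f$ (valid since each $(T_p(t))$ is analytic, hence in particular strongly continuous and generated by a sectorial operator), together with the density of $L^p\cap L^q$ and the uniform boundedness of $(T_p(t))$ on bounded time intervals; alternatively one may use that the Laplace transform of $t\mapsto T_p(t)\f$ determines the semigroup and coincides for $p$ and $q$ on the common domain.

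To prove the resolvent consistency, I would fix $\f\in L^p(\Rd;\Cm)\cap L^q(\Rd;\Cm)$ and set $\uu_p:=(\lambda_0-A_p)^{-1}\f\in W^{2,p}(\Rd;\Cm)$ with $V\uu_p\in L^p(\Rd;\Cm)$, and likewise $\uu_q\in W^{2,q}(\Rd;\Cm)$ with $V\uu_q\in L^q(\Rd;\Cm)$. Both functions solve the \emph{same} elliptic system $\lambda_0\uu-\A^0\uu+V\uu=\f$ in the distributional sense. I would then argue that $\uu_p=\uu_q$ by an a priori local-elliptic-regularity plus maximum-modulus-type estimate, or more directly by testing the difference $\ww:=\uu_p-\uu_q$: it solves $\lambda_0\ww-\A^0\ww+V\ww=0$. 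Since $\uu_p\in W^{2,p}$ and $\uu_q\in W^{2,q}$, by interior estimates $\ww\in W^{2,r}_{\rm loc}$ for $r=\min\{p,q\}=p$; using a cut-off function $\theta_n$ and the $L^r$-dissipativity inherent in the construction (coupled with $(V\eta,\eta)\ge c_0|\eta|^2$, which makes $\lambda_0-A_r$ injective for suitable $\lambda_0$), one gets $\ww=0$. The cleanest formulation: first show $\uu_p\in L^q(\Rd;\Cm)$ by a bootstrap (if $p<q$, multiply the equation by $\theta_n|\uu_p|^{q-2}\bar\uu_p$-type test functions, or invoke that the generated semigroup is ultracontractive-free but $V$-coercive so $\uu_p$ inherits the integrability of $\f$), then conclude $\uu_p$ and $\uu_q$ are both the unique $W^{2,q}$-solution, whence they agree.

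The main obstacle I anticipate is precisely the cross-integrability step: showing that $(\lambda_0-A_p)^{-1}$ maps $L^p\cap L^q$ into $L^q$ (equivalently, that the $W^{2,p}$-solution of the resolvent equation with $L^q$-data lies in $W^{2,q}$). For $q>p$ this is not automatic since the domain $D(A_p)$ enlarges as $p$ decreases only locally; globally one must exploit the coercivity $(V(x)\eta,\eta)\ge c_0|\eta|^2$, which forces decay of solutions at infinity, together with the local $W^{2,r}$-regularity coming from the $BUC$-ellipticity of $\A^0$ (Theorem~\ref{thm_AHS}). A convenient device is the approximation from Step~3 of Theorem~\ref{main}'s proof, or a Yosida-type regularization $V_\varepsilon=V(I+\varepsilon V)^{-1}$ of the potential: the corresponding operators $A_p^\varepsilon$ have bounded potential, their resolvents are trivially consistent by uniqueness of bounded-coefficient elliptic systems, and one then passes to the limit $\varepsilon\to0^+$ using the uniform resolvent bounds of Theorems~\ref{thm28}--\ref{finale_V} and the monotone behaviour of $V_\varepsilon$. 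I would carry out the proof along these lines, with the $\varepsilon$-regularization handling the integrability issue and a short limiting argument delivering the claim.
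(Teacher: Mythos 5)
Your reduction of the problem to resolvent consistency, and the passage from consistent resolvents to consistent semigroups via the Laplace/Dunford representation, are both sound and match the paper's strategy. You have also correctly isolated the real obstacle: showing that for $\f\in L^p\cap L^q$ the two resolvent solutions $\uu_p$ and $\uu_q$ coincide, i.e.\ that the $W^{2,p}$-solution actually lies in $D(A_q)$. The gap is in how you propose to overcome it. Testing the difference $\ww=\uu_p-\uu_q$ against $\theta_n|\ww|^{r-2}\overline{\ww}$ and invoking ``$L^r$-dissipativity inherent in the construction'' cannot work here: the paper's introduction stresses that for systems coupled in the second-order terms neither a maximum modulus principle nor $L^p$-dissipativity holds (unless $Q^{hk}=q_{hk}I$), and indeed the generation result is obtained via the Monniaux--Pr\"uss theorem precisely because dissipativity is unavailable. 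Your fallback --- a Yosida regularization $V_\varepsilon=V(I+\varepsilon V)^{-1}$ with the claim that the bounded-potential operators have resolvents that are ``trivially consistent by uniqueness of bounded-coefficient elliptic systems'' --- begs the question: even with a bounded potential, the second-order coupling is still present, and showing that the $W^{2,p}$-solution with data in $L^p\cap L^q$ belongs to $W^{2,q}$ is exactly the difficulty you are trying to resolve; it is not a consequence of uniqueness in either single space.

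The paper's actual mechanism is the missing idea: it interpolates the diffusion part, setting $Q^{hk}_t=tQ^{hk}+(1-t)\delta_{hk}I$, checks that all sectoriality, BIP and commutator constants are uniform in $t\in[0,1]$ (so that a single $\tilde\lambda$ lies in $\rho((A_t)_p)\cap\rho((A_t)_q)$ for all $t$), and then runs the method of continuity for $L_t=\tilde\lambda-\A_t$ acting from $D(A_p)\cap D(A_q)$ to $L^p\cap L^q$. Surjectivity at $t=0$ is available because there the operator is \emph{weakly} coupled ($\Delta-V$), and consistency for that endpoint follows from the Trotter--Kato product formula; the uniform a priori estimate then transports surjectivity to $t=1$. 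A final Sobolev-embedding bootstrap extends consistency from the special $\tilde\lambda$ to all of $\rho(A_p)\cap\rho(A_q)$. Without some device of this kind (homotopy to a weakly coupled operator, or an equivalent way of producing a solution simultaneously in $D(A_p)$ and $D(A_q)$), your argument does not close.
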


\begin{proof}
Fix $p,q\in (1,\infty)$ and observe that to get the claim it suffices to prove that the resolvent operators are consistent, i.e., that $R(\lambda,A_p)=R(\lambda,A_q)$ on $L^p(\Rd;\Cm)\cap L^q(\Rd;\Cm)$ for any $\lambda \in \rho(A_p)\cap \rho(A_q)$. Indeed, by the representation of the semigroups in $L^p(\Rd;\Cm)$ and $L^q(\Rd;\Cm)$ in terms of the resolvent operators via the Dunford integral, the equivalence $T_p(t)=T_q(t)$ on $L^p(\Rd;\Cm)\cap L^q(\Rd;\Cm)$ follows for every $t>0$.

{\em Step 1}.
For every $t\in [0,1]$, we consider the operator $\A_t$ formally defined by
\begin{eqnarray*}
\A_t\uu=\displaystyle\sum_{h,k=1}^dQ^{hk}_tD_{hk}\uu-V\uu=:\A^0_t\uu-V\uu,\qquad\;\,t\in [0,1],
\end{eqnarray*}
on smooth enough functions $\uu:\Rd\to\Cm$, where $Q^{hk}_t=tQ^{hk}+(1-t)\delta_{h,k}I$ for every $t\in [0,1]$.
Since, the coefficients $Q^{hk}_t$ satisfy the assumptions in Theorem \ref{thm28} with the constant $C_1$ being replaced by $C_1 \wedge 1$, the operator $\A^0_t$ verifies the assumptions of Theorem \ref{thm_AHS} with constants $M,\theta_0$ independent of $t$.
Consequently, applying  Theorem \ref{thm_AHS} we can find $\mu$ independent of $t\in [0,1]$ such that $\mu-(A_t)^0_p$ satisfies the properties stated in Theorem \ref{thm28} and there exists $\overline\theta\in[0,\frac{\pi}{2})$, independent of $t \in [0,1]$ (see also the proof of Theorem \ref{thm28}), and, consequently, $\overline\varphi \in [0,\frac{\pi}{2})$, independent of $t$, such that
\begin{eqnarray*}
\|R(\lambda,(A_t)^0_p-\mu)\|_{\mathcal{L}(L^p(\Rd;\Cm))}\le \frac{C}{1+|\lambda|}
\end{eqnarray*}
and
$\|((A_t)^0_p-\mu)^{is}\|_{\mathcal{L}(L^p(\Rd;\Cm))}\le \overline{M} e^{\overline \theta|s|}$
for any $\lambda \in \Sigma_{\pi-\overline \varphi}$, $s \in \R$ and some positive constants $C$ and $\overline{M}$ independent of $t$.

In view of this, the proof of Theorem \ref{main} shows that the constant $c$ in estimate \eqref{commutator} can be chosen independent of $t$. Thus, taking Theorem \ref{M_P} into account, we can find $\sigma$ independent of $t$ such that $\sigma+(A_t)_p$ is sectorial in a sector independent of $t$. We have so proved that the resolvent sets $\rho((A_t)_p)$ and $\rho((A_t)_q)$ contain right-halflines independent of $t$. This shows that there exists $\tilde{\lambda} \in \rho((A_t)_p))\cap\rho((A_t)_q)$ for any $t \in [0,1]$.

{\em Step 2}.
Here, we prove that
$R(\tilde\lambda,A_p)$
and $R(\tilde\lambda,A_q)$ coincide on $L^p(\Rd;\Cm)\cap L^q(\Rd;\Cm)$.
First, observe that by Theorem \ref{main},
there exists a positive constant $C_{p,q}(t)$,
such that
\begin{align}\label{ele}
&\|\uu\|_{W^{2,p}(\Rd;\Cm)}+\|V_p\uu\|_{L^p(\Rd;\Cm)}
+\|\uu\|_{W^{2,q}(\Rd;\Cm)}+\|V_q\uu\|_{L^q(\Rd;\Cm)}\notag\\
\le &C_{p,q}(t)\left (\|\widetilde\lambda\uu-\A_t\uu\|_{L^q(\Rd;\Cm)}
+\|\widetilde
\lambda\uu-\A_t\uu\|_{L^q(\Rd;\Cm)}\right )
\end{align}
for every $\uu\in D(A_p)\cap D(A_q)$.
Now, let us consider the Banach spaces $(X_{p,q},\|\cdot\|_{X_{p,q}})$ and $(Y_{p,q},\|\cdot\|_{Y_{p,q}})$, where
\begin{eqnarray*}
X_{p,q}:=D(A_p)\cap D(A_q), \qquad\;\,
Y_{p,q}:=L^p(\Rd;\Cm)\cap L^q(\Rd;\Cm),
\end{eqnarray*}
endowed with the norms
$\|\uu\|_{X_{p,q}}=\|\uu\|_{D(A_p)}+\|\uu\|_{D(A_q)}$ for every $\uu\in X_{p,q}$ and $\|\vv\|_{Y_{p,q}}=\|\vv\|_{L^{p}(\Rd;\Cm)}+\|\vv\|_{L^q(\Rd;\Cm)}$ for every $\vv\in Y_{p,q}$. Let us
define the family of the operators $\{L_t: t\in [0,1]\}$ by setting
 \begin{eqnarray*}L_t:= \tilde{\lambda}-\A_t:X_{p,q}\to Y_{p,q}, \qquad\;\, t \in [0,1].\end{eqnarray*}
Observe that if $t_n$ converges to some $t_0\in [0,1]$ as $n$ tends to $\infty$, then
for every $\uu\in X_{pq}$, it holds that
\begin{align*}
\|L_{t_n}\uu- L_{t_0}\uu\|_{Y_{p,q}}&= \|(t_0-t_n)(\A_0 \uu-\Delta \uu)\|_{Y_{p,q}}\\
&\leq C|t_n-t_0|(\|\uu\|_{W^{2,p}(\Rd;\Cm)}+\|\uu\|_{W^{2,q}(\Rd;\Cm)})\\
&\leq C|t_n-t_0|\|\uu\|_{X_{p,q}},
\end{align*}
where $C$ is a positive constant depending only on the $L^\infty$-norm of the coefficients $Q^{hk}$.
Hence, the map $t\mapsto L_t$  is continuous from $[0,1]$ into $\mathcal L(X_{pq}, Y_{pq})$.

Then, by estimate \eqref{ele} we get
\begin{eqnarray*}
\|L_t\uu\|_{Y_{p,q}}\ge (C_{p,q}(t))^{-1}\|\uu\|_{X_{p,q}}
\end{eqnarray*}
for any $\uu \in X_{p,q}$.
To conclude, we observe that, by the Trotter-Kato Product formula (see \cite[III Corollary 5.8]{engnagel}), the operator $L_0$ is consistent in the $L^p$-spaces and, consequently, it is surjective from $X_{p,q}$ into $Y_{p,q}$.
Thus, by applying the continuity method, in the stronger version of \cite[Lemma 8.1]{MNS}, we deduce that $L_1:X_{p,q}\to Y_{p,q}$ is surjective as well, and this completes the proof of this step.

{\em Step 3}.
Here, we fix $\lambda\in \rho(A_p)\cap \rho (A_q)$ and claim that $R(\lambda, A_p)= R(\lambda,A_q)$ on $L^p(\Rd;\Cm)\cap L^q(\Rd;\Cm)$.
Let us fix $\f\in L^p(\Rd;\Cm)\cap L^q(\Rd;\Cm) $
and consider the solution $\uu\in D(A_p)$ of the equation $\lambda \uu-\A\uu=\f$. By the Sobolev embedding theorem, $\uu$ belongs to $L^r(\Rd;\Cm)$ for some $r>p$. Clearly, if $r \ge q$ then $\uu$ belongs to $L^q(\Rd;\Cm)$ and by difference $\A\uu$ belongs to $L^q(\Rd;\Cm)$ as well. Now, we consider the equation
$\tilde\lambda \vv-\A\vv=\tilde\lambda \uu-\A\uu$. By Step 2, this equation admits a solution $\vv \in X_{p,q}$ and it coincides with $\uu$ by uniqueness in $L^p(\Rd;\Cm)$. Hence, the equality $R(\lambda, A_p)= R(\lambda,A_q)$ is proved in this case.

On the other hand, if $r<q$ then with the previous argument we obtain that $\uu \in D(A_r)$. Repeating this procedure, in a finite number of steps we show that $\uu$ belongs to $D(A_q)$, proving the claim.
\end{proof}

In view of Proposition \ref{prop-4.2}, in the rest of the paper we simply write $(T(t))$ instead of $(T_p(t))$.
As a next step, we show some inclusions for the spectra of the operators $A_p$ when $p$ varies in $(1,\infty)$. Clearly, $P\sigma(A_p)\subset A\sigma(A_p)$.

\begin{prop}
\label{prop-4.3}
For any  $1<p<q<\infty$ it holds that $P\sigma(A_p)\subseteq P\sigma(A_q)$ and $P\sigma(A_q)\subseteq A\sigma(A_p)$.
\end{prop}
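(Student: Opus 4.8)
The plan is to exploit two facts: (1) the semigroups $(T_p(t))$ and $(T_q(t))$ are consistent on $L^p(\Rd;\Cm)\cap L^q(\Rd;\Cm)$ by Proposition~\ref{prop-4.2}, and (2) by the Sobolev embedding theorem, for $p<q$ eigenfunctions of $A_p$ automatically gain integrability. More precisely, if $\lambda\in P\sigma(A_p)$ with eigenfunction $\uu\in D(A_p)\subseteq W^{2,p}(\Rd;\Cm)$, then $\uu\in L^r(\Rd;\Cm)$ for some $r>p$, and by the bootstrap argument already used in Step~3 of the proof of Proposition~\ref{prop-4.2} (applying the Sobolev embedding a finite number of times, using $A_r\uu=\lambda\uu$ at each stage), we get $\uu\in D(A_s)$ for every $s\in[p,\infty)$; in particular $\uu\in D(A_q)$. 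Since $A_q\uu=\A\uu=A_p\uu=\lambda\uu$ (here one uses that $\A$ acts the same way on all the $L^p$-realizations, or equivalently the consistency of the resolvents), we conclude $\lambda\in P\sigma(A_q)$. This gives the first inclusion $P\sigma(A_p)\subseteq P\sigma(A_q)$.

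For the second inclusion, I would argue by duality/approximation rather than by bootstrapping, since now we are trying to go from larger $q$ to smaller $p$ and integrability is lost rather than gained. Fix $\lambda\in P\sigma(A_q)$ with normalized eigenfunction $\uu\in D(A_q)\subseteq W^{2,q}(\Rd;\Cm)$, $\|\uu\|_q=1$. The goal is to show $\lambda\in A\sigma(A_p)$, i.e.\ to produce a sequence $(\uu_n)\subset D(A_p)$ with $\|\uu_n\|_p=1$ and $\|\lambda\uu_n-A_p\uu_n\|_p\to 0$. The natural candidates are the truncations $\uu_n:=\theta_n\uu$ with $(\theta_n)$ a standard sequence of smooth cut-off functions (as in the proof that $C^\infty_c(\Rd;\Cm)$ is a core), suitably renormalized in $L^p$. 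Note that $\uu_n\in D(A_p)$ because $\uu_n$ has compact support and $\uu\in W^{2,q}_{\rm loc}\subset W^{2,p}_{\rm loc}$, $V\uu\in L^q_{\rm loc}\subset L^p_{\rm loc}$. One computes
\begin{eqnarray*}
\lambda\uu_n-\A\uu_n=\theta_n(\lambda\uu-\A\uu)-\sum_{h,k=1}^d Q^{hk}(D_{hk}\theta_n\,\uu+D_h\theta_n D_k\uu+D_k\theta_n D_h\uu)=-\sum_{h,k=1}^d Q^{hk}(\ldots),
\end{eqnarray*}
since $\lambda\uu-\A\uu=0$, so the residual is a first-order commutator supported on the annulus where $\theta_n$ is non-constant. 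The key point is that this residual tends to $0$ in $L^p$ \emph{relative to} $\|\uu_n\|_p$: by choosing $\theta_n(x)=\vartheta(x/R_n)$ for a cleverly chosen sequence $R_n\to\infty$, the derivatives of $\theta_n$ are $O(R_n^{-1})$ and $O(R_n^{-2})$ on $B(2R_n)\setminus B(R_n)$, and since $\uu,D\uu\in L^q\subset L^p_{\rm loc}$ with the tail masses $\int_{|x|>R}|\uu|^p$, $\int_{|x|>R}|D\uu|^p$ (when finite) controlled, a suitable diagonal/subsequence choice makes $\|\lambda\uu_n-A_p\uu_n\|_p/\|\uu_n\|_p\to 0$.

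The main obstacle is precisely this last renormalization step: a priori $\uu\in L^q$ need not lie in $L^p$ at all when $p<q$ (mass may escape to infinity), so $\|\uu_n\|_p$ could blow up, and one must verify that the commutator residual blows up strictly slower. I would handle this by a careful comparison of the $L^p$-norms of $\uu$ and $\nabla\uu$ on dyadic annuli: writing $a_j:=\int_{2^j\le|x|<2^{j+1}}(|\uu|^p+|\nabla\uu|^p)\,dx$, one has $\|\uu_n\|_p^p\gtrsim$ some partial sum of the $a_j$'s (those in $B(R_n)$), while the residual norm is $\lesssim R_n^{-p}$ times the annular contribution near $R_n$ plus lower-order terms; choosing $R_n$ along a subsequence of scales where the annular mass is not too large relative to the accumulated interior mass forces the ratio to $0$. (If it happens that $\uu\in L^p$ already, everything is immediate and one recovers even $\lambda\in P\sigma(A_p)$.) Once this quantitative estimate is in place, $(\uu_n/\|\uu_n\|_p)$ is the desired approximate-eigenvector sequence and $\lambda\in A\sigma(A_p)$, completing the proof.
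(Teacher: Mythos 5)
Your first inclusion is correct and is essentially the paper's argument: the paper routes the gain of integrability through the identity $T(t)\uu=e^{\lambda t}\uu$ and the smoothing of the semigroup, while you apply Sobolev embedding and the bootstrap of Proposition \ref{prop-4.2} directly to the eigenvalue equation; both rest on the same mechanism and on the consistency/surjectivity established there.

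The second inclusion, however, has a genuine gap. Quantify the residual of your truncation $\uu_n=\theta_n\uu$ at scale $R_n$: the dangerous contribution is the first-order term $Q^{hk}D_h\uu\, D_k\theta_n$, whose $L^p$-norm is controlled, via H\"older on the annulus $B(2R_n)\setminus B(R_n)$ of volume $\sim R_n^d$, by $R_n^{\,d(1/p-1/q)-1}\,\|D_h\uu\|_{L^q(B(2R_n)\setminus B(R_n))}$. The second factor tends to zero only because the annular $L^q$-masses of $\nabla\uu$ are summable; summability yields, along subsequences of scales, decay that is at best logarithmic in $R_n$, and in general the rate is completely uncontrolled. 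Whenever $p\le dq/(d+q)$, i.e.\ $d(1/p-1/q)\ge 1$, the prefactor grows like a nonnegative power of $R_n$, and no choice of $R_n$ (nor wider or logarithmically graded cut-offs, which gain only logarithms) forces the product to zero. Your dyadic bookkeeping of the ``accumulated interior mass'' addresses only the zeroth-order term $\uu\,D_{hk}\theta_n$, not the term involving $\nabla\uu$; moreover the renormalization you worry about is a non-issue, since $\|\uu_n\|_{L^p(\Rd;\Cm)}\ge\|\uu\|_{L^p(B(R_0);\Cm)}>0$ for a fixed $R_0$ and all large $n$, so the whole difficulty sits in the numerator. The paper's proof supplies the missing idea: a single truncation step only descends from $L^s$ to $L^r$ for $r>ds/(d+s)$ (where the exponent $d(1/r-1/s)-1$ is negative and the residual does vanish); the Sobolev embedding $W^{2,r}(\Rd;\Cm)\hookrightarrow L^s(\Rd;\Cm)$ then shows the truncated functions do not degenerate in $L^r$, and the construction is iterated along the decreasing sequence $r_{n+1}=\frac12\bigl(r_n+\frac{dr_n}{d+r_n}\bigr)$, which tends to $0$, so that any $p>1$ is reached after finitely many steps. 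Without such an intermediate-exponent iteration your argument proves $P\sigma(A_q)\subseteq A\sigma(A_p)$ only for $p>dq/(d+q)$.
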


\begin{proof}
Let us begin by proving the inclusion $P\sigma(A_p)\subseteq P\sigma(A_q)$. For this purpose, we fix $\lambda\in P\sigma(A_p)$ and let $\uu\in D(A_p)$, different from zero, be such that $\lambda\uu-A_p\uu=\bm{0}$. Then, $T(t)\uu=e^{\lambda t}\uu$ for every $t>0$. Since $T(t)$ maps $L^p(\Rd;\Cm)$ into $D(A_p)$, hence in $W^{2,p}(\Rd;\Cm)$, from the Sobolev embedding theorems, the consistency of the semigroup $(T(t))$ and (if needed) a bootstrap argument, we can infer that $T(t)\uu$ belongs to $L^q(\Rd;\Cm)$. Hence, $e^{\lambda t}\uu$, or equivalently $\uu$, belongs to $L^q(\Rd;\Cm)$. Consequently, $T(t)\uu\in D(A_q)$, so that $\uu$ itself belongs to this space and, by the consistence of $A_r$ in the $L^r$ scale, $\lambda\uu=A_q\uu$, so that $\lambda\in P\sigma(A_q)$.

Let us now prove the second set inclusion in the statement of the proposition. We split the proof into two steps.

{\em Step 1}. Here, we prove that, if $\lambda\in\mathbb{C}$ and $(\uu_n)\subset D(A_s)$, for some $s\in (1,\infty)$, is a sequence such that $\limsup\limits_{n\to \infty}\|\uu_n\|_{L^s(B(n);\mathbb{C})}>0$ and for every $n \in \N$, $\|\uu_n\|_{L^s(\Rd;\Cm)}=1$  and  $\|\f_n\|_{L^s(\Rd;\Cm)}\le n^{-1}$ where $\f_n:=\lambda\uu_n-A_s\uu_n$, then $\lambda\in A\sigma(A_r)$ for every $r\in\left (1\vee\frac{ds}{d+s},s\right ]$.

So, let us fix $\lambda$ and $(\uu_n)$ as above. Note that, since $A_s\uu_n=\lambda\uu_n-\f_n$, we can estimate
\begin{align*}
\|A_s\uu_n\|_{L^s(\Rd;\Cm)}\le |\lambda|\|\uu_n\|_{L^s(\Rd;\Cm)}
+\|\f_n\|_{L^s(\Rd;\Cm)}
\le |\lambda|+1,
\end{align*}
so that the sequence $(\uu_n)$ is bounded in $D(A_s)$. From \eqref{norm_eq}, we conclude that the sequence $(\uu_n)$ is bounded in $W^{2,s}(\Rd;\Cm)$.

Next, we introduce a smooth function $\vartheta$, such that $\vartheta\equiv 1$ in $B(1)$ and $\vartheta\equiv 0$ outside the ball $B(2)$, and, for every $n\in\N$, we consider the function $\vartheta_n$, defined by $\vartheta_n(x)=\vartheta(n^{-1}x)$ for every $x\in\R^d$. Let us set $\vv_n=\vartheta_n\uu_n$ for every $n\in\N$. Then, $\vv_n$ belongs to $D(A_r)$ for every $r\in (1,s]$ and
\begin{align*}
\lambda\vv_n-A_r\vv_n=\f_n\vartheta_n-\sum_{h,k=1}^d(Q^{hk}+Q^{kh})D_h\uu_n D_k\vartheta_n-\sum_{h,k=1}^dQ^{hk}\uu_n D_{hk}\vartheta_n=:\bm g_n.
\end{align*}

Observe that
\begin{align*}
\|(Q^{hk}+Q^{kh})D_h\uu_n D_k\vartheta_n\|_{L^r(\Rd;\Cm)}^r
\le & \|Q^{hk}\!+\!Q^{kh}\|_{\infty}^r\int_{\Rd}|D_h\uu_n|^r|D_k\vartheta_n|^rdx\\
\le &\|Q^{hk}\!+\!Q^{kh}\|_{\infty}^r
\bigg (\int_{\Rd}|D_h\uu_n|^sdx\bigg )^{\frac{r}{s}}\!
\bigg (\int_{\Rd}|D_k\vartheta_n|^{\frac{sr}{s-r}}dx\bigg )^{1-\frac{r}{s}}\\
\le & \|Q^{hk}\!+\!Q^{kh}\|_{\infty}^r\|D_h\uu_n\|_{L^s(\Rd;\Cm)}^r\|D_k\vartheta\|_{\infty}^rn^{-r}|B(2n)|^{1-\frac{r}{s}}\\
=& Cn^{d\left (1-\frac{r}{s}\right )-r}\sup_{\ell\in \N}\|\uu_\ell\|_{W^{2,s}(\Rd;\Cm)}^r
\end{align*}
for every $n\in\N$, every $h,k=1,\ldots,d$, every $r\in (1,s)$ and some positive constant $C$, where $|B(2n)|$ denotes the Lebesgue measure of the ball $B(2n)$. Similarly,
\begin{align*}
\|Q^{hk}\uu_n D_{hk}\vartheta_n\|_{L^r(\Rd;\Cm)}\le
C'n^{d\left (\frac{1}{r}-\frac{1}{s}\right )-2}\sup_{\ell\in\N}\|\uu_{\ell}\|_{W^{2,s}(\Rd;\Cm)}
\end{align*}
and
\begin{eqnarray*}
\|\f_n\vartheta_n\|_{L^r(\Rd;\Cm)}\le C''\|\f_n\|_{L^s(\Rd;\Cm)}n^{d\left (\frac{1}{r}-\frac{1}{s}\right )}\le
C''n^{d\left (\frac{1}{r}-\frac{1}{s}\right )-1}
\end{eqnarray*}
for every $n$, $h$ and $k$ as above and some positive constants $C'$ and $C''$.

Summing up, we have proved that
\begin{align*}
\|\bm g_n\|_{L^r(\Rd;\Cm)}\le C_*n^{d\left (\frac{1}{r}-\frac{1}{s}\right )-1}
\end{align*}
for every $n\in\N$, every $r\in (1,s]$ and some positive constant $C_*$, independent of $n\in\N$. Hence,
if $r\in \left (\frac{ds}{d+s},s\right ]$, then the sequence $(\bm g_n)$ converges to zero in $L^r(\Rd;\Cm)$.

To conclude that $\lambda\in A\sigma(A_r)$ for $r$ as above, we need to show that the sequence $\vv_n$ does not vanish in $L^r(\Rd;\Cm)$ as $n$ tends to $\infty$. Suppose, by contradiction, that $\|\vv_n\|_{L^r(\Rd;\Cm)}$ vanishes as $n$ tends to $\infty$. Since $A_r\vv_n=\lambda\vv_n-\g_n$ for every $n\in\N$ and $\g_n$ converges to $\bm{0}$ in $L^r(\Rd;\Cm)$, the sequence $(A_r\vv_n)$
vanishes in $L^r(\Rd;\Cm)$ as $n$ tends to $\infty$. Hence, $\vv_n$ vanishes in $D(A_r)$ and, taking \eqref{norm_eq} into account, we deduce that $\vv_n$ vanishes in $W^{2,r}(\Rd;\Cm)$ as $n$ tends to $\infty$. Now, we apply the Sobolev embedding theorems to get to a contradiction. We distinguish two cases.
\begin{itemize}
\item
If $r\ge \frac{d}{2}$, then
$W^{2,r}(\Rd;\Cm)$ is continuously embedded into $L^t(\Rd;\Cm)$ for every $t\in [r,\infty)$. In particular, $W^{2,r}(\Rd;\Cm)$ is continuously embedded into $L^s(\Rd;\Cm)$.
\item
If $r<\frac{d}{2}$, then $W^{2,r}(\Rd;\Cm)$ is continuously embedded into $L^t(\Rd;\Cm)$ for every $t\in \left [r,\frac{dr}{d-2r}\right ]$. As it is immediately seen, the right-hand side of the previous interval il larger than $s$. Hence, also in this case $W^{2,r}(\Rd;\Cm)$ is continuously embedded into $L^s(\Rd;\Cm)$.
\end{itemize}

From the above arguments, we conclude that there exists a positive constant $c$ such that
$\|\vv_n\|_{L^s(\Rd;\Cm)}\le c\|\vv_n\|_{W^{2,r}(\Rd;\Cm)}$ for every $n\in\N$, so that $\|\vv_n\|_{L^s(\Rd;\Cm)}$ vanishes as $n$ tends to $\infty$. This is a contradiction, since $\|\vv_n\|_{L^s(\Rd;\Cm)}\ge\|\uu_n\|_{L^s(B(n);\Cm)}$ and this latter norm does not vanish as $n$ tends to $\infty$, by assumptions.

Based on the above results, we can determine  a strictly increasing sequence $(k_n)\subset\N$ such that $\|\vv_{k_n}\|_{L^r(\Rd;\Cm)}\ge c'$
for every $n\in\N$ and some positive constant $c'$. Hence, the sequence
$(\ww_n)$, defined by $\ww_n=\|\vv_{k_n}\|_{L^r(\Rd;\Cm)}^{-1}\vv_{k_n}$ for every $n\in\N$, belongs to the boundary of the unit ball of $L^r(\Rd;\Cm)$ and $\lambda\ww_n-A_r\ww_n$ vanishes in $L^r(\Rd;\Cm)$ as $n$ tends to $\infty$. Hence, $\lambda\in A\sigma(A_r)$ for every $r\in\left (1 \vee \frac{ds}{d+s},s\right ]$.

{\em Step 2}. Here, we assume that $\lambda\in P\sigma(A_q)$ and prove that it belongs to the approximate spectrum of $A_p$. We denote by $\bm{0}\not\equiv \uu\in D(A_q)$ an eigenfunction of $A_q$ associated with the eigenvalue $\lambda$, with $\|\uu\|_{L^q(\Rd;\Cm)}=1$.
Since $P\sigma(A_q)\subseteq A\sigma(A_q)$ and $\|\uu\|_{L^q(B(n);\Cm)}$ converges to $\|\uu\|_{L^q(\Rd;\Cm)}>0$ as $n$ tends to $\infty$, the assumptions of Step 1 are satisfied with $\uu_n=\uu$ for every $n\in \N$. Thus, it follows that $\lambda\in A\sigma(A_r)$ for every
$r\in\left (1\vee\frac{qd}{q+d},q\right ]$. If $\frac{qd}{q+d}<1$, then we get the claim. On the other hand, if $\frac{qd}{q+d}\ge 1$ and $\frac{qd}{q+d}<p$, then $\lambda\in A\sigma(A_p)$ and we are done. On the contrary, if $\frac{qd}{q+d}\ge p$, then we use a bootstrap argument to show that $\lambda\in A\sigma(A_p)$.
For this purpose, we observe that
$\lambda\in A\sigma(A_{r_2})$, where
$r_2=\frac{1}{2}\left (q+\frac{dq}{d+q}\right )$. Moreover, still from Step 1, we can infer that there exists a strictly increasing sequence $(k_{n,1})\subset\N$ such that the sequence $(\vv_{n,1})\in D(A_{r_2})$, defined by
$\vv_{n,1}=\uu\vartheta_{k_{n,1}}$ for every $n\in\N$ is such that $\|\f_{n,1}\|_{L^{r_2}(\Rd;\Cm)}\le n^{-1}$ for every $n\in\N$, where
$\f_{n,1}:=\lambda\vv_{n,1}-A_{r_2}\vv_{n,1}$ vanishes in $L^r(\Rd;\Cm)$ as $n$ tends to $\infty$. Note that $\|\vv_{n,1}\|_{L^{r_2}(B(n);\Cm)}
=\|\uu\|_{L^{r_2}(B(n);\Cm)}$ for every $n\in\N$. The monotone convergence theorem shows that
$\|\uu\|_{L^{r_2}(B(n);\Cm)}$ tends to $\|\uu\|_{L^{r_2}(\Rd;\Cm)}$ if $\uu\in L^{r_2}(\Rd;\Cm)$ (and this latter norm is positive since $\uu$ does not vanish almost everywhere in $\Rd$), and $\|\uu\|_{L^{r_2}(B(n);\Cm)}$ tends to $\infty$
if $\uu\notin L^{r_2}(\Rd;\Cm)$.
We have so proved that all the assumptions of Step 1 are satisfied, so that $\lambda\in A\sigma(A_r)$ for every $r\in \left (1 \vee\frac{dr_2}{d+r_2},r_2\right ]$.  Arguing as above, if $\frac{dr_2}{d+r_2}<1$ or if $1\le \frac{dr_2}{d+r_2}<p$, then we are done. Otherwise, we iterate the argument. We set $r_3=\frac{1}{2}\left (r_2+\frac{dr_2}{d+r_2}\right )$
and consider a sequence $(k_{n,2})\subset (k_{n,1})$ such that the sequence $(\vv_{n,2})$, defined by $\vv_{n,2}=\uu\vartheta_{k_{n,2}}$ for every $n\in\N$, satisfies the conditions $\lim_{n\to\infty}\|\vv_{n,2}\|_{L^{r_3}(B(n);\Cm)}\in (0,\infty)\cup\{\infty\}$ and $\|\lambda\vv_{n,2}-A_{r_3}\vv_{n,2}\|_{L^{r_3}(B(n);\Cm)}\le n^{-1}$ for every $n\in\N$.
Therefore, we can apply Step 1 once more and infer that
$\lambda\in A\sigma(A_r)$ for every $r\in \left (1 \vee \frac{r_3d}{d+r_3},r_3\right ]$. If $p$ belongs to such an interval, then we are done, otherwise we apply further the same argument and in a finite number of steps we conclude that $\lambda\in A\sigma(A_p)$. Indeed, the sequence $(r_n)$, defined recursively by
\begin{eqnarray*}
\left\{
\begin{array}{ll}
r_{n+1}=\displaystyle\frac{1}{2}\bigg (r_n+\frac{dr_n}{r_n+d}\bigg ), & n\in\N,\\[3mm]
r_1=q,
\end{array}
\right.
\end{eqnarray*}
is decreasing and converges to $0$ as $n$ tends to $\infty$.
\end{proof}

As already pointed out in the introduction, in general, one cannot expect that the spectrum of the realization of an elliptic operator in $L^p(\Rd;\Cm)$ is independent of $p\in (1,\infty)$ even in the scalar case.
Indeed, there are many situations where the spectrum does depend on $p$, see for instance \cite{arendt,davies, dav_sim_tay, Fin_isi, sturm}.

In the following theorem, we provide a sufficient condition for the equality $\sigma(A_p)=\sigma(A_q)$ for $p$ and $q$ varying in $(1,\infty)$.

\begin{thm}
\label{thm-comp}
Assume that there exists a function $\psi:\Rd\to\R$, diverging to $\infty$ as $|x|$ tends to $\infty$, such that $|V(x)\eta|\ge\psi(x)|\eta|$ for every $x\in\Rd$ and $\eta\in\Rm$. Then, the spectrum of the operator $A_p$ is independent of $p\in (1,\infty)$ and consists of eigenvalues only. Moreover, for every $t>0$ the operator $T(t)$ is compact.
\end{thm}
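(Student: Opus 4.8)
The plan is to reduce all three assertions to the single statement that the embedding $D(A_p)\hookrightarrow L^p(\Rd;\Cm)$ is compact, where $D(A_p)$ carries the graph norm of $A_p$. Once this is established, the resolvent $R(\lambda,A_p)$ is compact for $\lambda$ in the (non-empty) resolvent set, since it factors as the isomorphism $R(\lambda,A_p)\colon L^p(\Rd;\Cm)\to(D(A_p),\|\cdot\|_{D(A_p)})$ followed by the compact embedding into $L^p(\Rd;\Cm)$; from this the discreteness of the spectrum, its $p$-independence, and the compactness of $T(t)$ will follow by soft arguments.

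To prove the compact embedding I would use the norm equivalence \eqref{norm_eq} from Theorem~\ref{main}, which allows us to replace the graph norm of $A_p$ by $\uu\mapsto\|\uu\|_{W^{2,p}(\Rd;\Cm)}+\|V\uu\|_{L^p(\Rd;\Cm)}$. So let $(\uu_n)$ be bounded in $W^{2,p}(\Rd;\Cm)$ with $(V\uu_n)$ bounded in $L^p(\Rd;\Cm)$; the claim is that it has an $L^p$-convergent subsequence. For the behaviour at infinity I would exploit the growth hypothesis on $V$: setting $\psi_R:=\inf_{|x|\ge R}\psi(x)$, which tends to $\infty$ as $R\to\infty$, the inequality $|V(x)\eta|\ge\psi(x)|\eta|$ yields
\[
\int_{|x|\ge R}|\uu_n(x)|^p\,dx\le\psi_R^{-p}\int_{|x|\ge R}|V(x)\uu_n(x)|^p\,dx\le\psi_R^{-p}\sup_{k\in\N}\|V\uu_k\|_{L^p(\Rd;\Cm)}^p,
\]
so the tails of $(\uu_n)$ are small uniformly in $n$. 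Locally, $(\uu_n)$ is bounded in $W^{1,p}(B(R);\Cm)$ for every $R$, hence relatively compact in $L^p(B(R);\Cm)$ by the Rellich--Kondrachov theorem; a diagonal extraction then gives a subsequence converging in $L^p_{{\rm loc}}(\Rd;\Cm)$. Combining the uniform tail bound with this local convergence shows that the subsequence is Cauchy in $L^p(\Rd;\Cm)$, which proves the compactness of the embedding.

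With the compact resolvent in hand, each $A_p$ has discrete spectrum consisting of eigenvalues of finite algebraic multiplicity, so $\sigma(A_p)=P\sigma(A_p)=A\sigma(A_p)$ for every $p\in(1,\infty)$. Given $1<p<q<\infty$, Proposition~\ref{prop-4.3} then gives $\sigma(A_p)=P\sigma(A_p)\subseteq P\sigma(A_q)=\sigma(A_q)$ and $\sigma(A_q)=P\sigma(A_q)\subseteq A\sigma(A_p)=\sigma(A_p)$, hence $\sigma(A_p)=\sigma(A_q)$, proving the $p$-independence. For the last assertion, since $(T(t))$ is analytic (Theorem~\ref{main}), $T(t)$ maps $L^p(\Rd;\Cm)$ into $D(A_p)$ for every $t>0$ and, by the closed graph theorem, $T(t)\in\mathcal{L}(L^p(\Rd;\Cm),(D(A_p),\|\cdot\|_{D(A_p)}))$; composing with the compact embedding of the previous step shows that $T(t)$ is compact. (Equivalently, one may write $T(t)=R(\lambda,A_p)(\lambda-A_p)T(t)$ and use that $A_pT(t)$ is bounded on $L^p(\Rd;\Cm)$ for $t>0$.)

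I expect the only genuine difficulty to lie in the compact embedding step, namely in correctly combining the uniform decay at infinity coming from $\psi\to\infty$ with the local Rellich compactness, and in using \eqref{norm_eq} to pass from the graph norm of $A_p$ to simultaneous control of $\|\cdot\|_{W^{2,p}(\Rd;\Cm)}$ and $\|V\cdot\|_{L^p(\Rd;\Cm)}$; the remaining steps are routine functional-analytic consequences of having a compact resolvent together with Proposition~\ref{prop-4.3}.
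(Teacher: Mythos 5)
Your proposal is correct and follows essentially the same route as the paper: compact embedding of $D(A_p)$ into $L^p(\Rd;\Cm)$ via the uniform tail estimate from $|V\uu|\ge\psi|\uu|$ combined with Rellich--Kondrachov locally, then compact resolvent gives that the spectrum consists of eigenvalues only, $p$-independence follows from Proposition~\ref{prop-4.3}, and $T(t)$ is compact (the paper deduces this from the Dunford integral representation, you from the smoothing of the analytic semigroup, but these are interchangeable).
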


\begin{proof}
We fix $p\in (1,\infty)$ and, to begin with, we show that $D(A_p)$ is compactly embedded into $L^p(\Rd;\Cm)$. For this purpose, we observe that, if $\uu$ belongs to the closed unit ball of $D(A_p)$, then $V\uu$ belongs to $L^p(\Rd;\Cm)$ and $\|V\uu\|_{L^p(\Rd;\Cm)}\le 1$. Since $|V\uu|\ge \psi |\uu|$, for every $\varepsilon>0$ we can determine $r>0$ such that
\begin{equation}
\int_{\Rd\setminus B(r)}|\uu(x)|^pdx\le \varepsilon
\label{elisabetta}
\end{equation}
for every $\uu$ in the unit ball of $D(A_p)$. Recalling that $D(A_p)\subset W^{2,p}(\Rd;\Cm)$ and $W^{2,p}(B(r);\Cm)$ is compactly embedded into $L^p(B(r);\Cm)$, from
\eqref{elisabetta} it follows that the unit ball of $D(A_p)$
is totally bounded in $L^p(\Rd;\Cm)$. As a consequence, $D(A_p)$ compactly embeds into $L^p(\Rd;\Cm)$. From this result, we deduce that the operator $R(\lambda,A_p)$ is compact for every $\lambda\in\rho(A_p)$. Since the space of compact operators is a closed subspace of the space of all bounded operators from $L^p(\Rd;\Cm)$ into itself and, for every $t>0$, the operator $T(t)$ is defined via the Dunford integral, we conclude that each operator $T(t)$ is compact as well.

Further, we observe that the compactness of the resolvent operators $R(\lambda,A_p)$ implies that $\sigma(A_p)$ consists of eigenvalues only.

To complete the proof, we show that $\sigma(A_p)$ is independent of $p\in (1,\infty)$. For this purpose, we fix $p,q\in (1,\infty)$, with $p<q$. By Proposition \ref{prop-4.3}, we know that $P\sigma(A_p)\subseteq P\sigma(A_q)$
and $P\sigma(A_q)\subseteq A\sigma(A_p)$. Since the spectrum of each operator $A_p$ consists of eigenvalues only, the equality $\sigma(A_p)=\sigma(A_q)$ follows immediately.
\end{proof}

Theorem \ref{thm-comp} shows that, if the semigroup $(T(t))$ is compact on the $L^p$-scale, then its spectrum is independent of $p$. In the following theorem, we consider the operator $\mathcal L$, in divergence form, i.e., we assume that
\begin{eqnarray*}
\bm{\mathcal L}\uu=\sum_{h,k=1}^dD_h(Q^{hk}D_k\uu)-V\uu,
\end{eqnarray*}
on sufficiently smooth functions $\uu$, and we prove some inclusion of the spectra of its realization in $L^p(\Rd;\Cm)$ when $p$ varies in suitable subsets of $(1,\infty)$.

\begin{thm}
Suppose that the diffusion coefficients of the operator $\mathcal L$ belong to $W^{1,\infty}(\Rd)$  and that $V$ is symmetric.
Then, for every $p\in (1,\infty)$, the realization $L_p$ of the operator $\mathcal L$ in $L^p(\Rd;\Cm)$, with domain $D(L_p)=
\{\uu\in W^{2,p}(\Rd;\Cm): V\uu\in L^p(\Rd;\Cm)\}$, generates a strongly continuous analytic semigroup. Moreover,
$\sigma(L_q)\subseteq\sigma(L_p)$ for any $q$ belonging to the interval with endpoints $p$ and $p'$.
\end{thm}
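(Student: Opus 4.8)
The plan is to derive the generation statement from Theorem~\ref{main} by a bounded-perturbation argument, and to obtain the spectral inclusion by combining a duality identity — the only place where the divergence form and the symmetry of $V$ really enter — with Riesz--Thorin interpolation of the resolvents. To begin, I would write $\bm{\mathcal L}$ in non-divergence form, $\bm{\mathcal L}\uu=\A^0\uu+\mathcal B\uu-V\uu$, where $\A^0\uu=\sum_{h,k=1}^dQ^{hk}D_{hk}\uu$ and $(\mathcal B\uu)(x)=\sum_{h,k=1}^d(D_hQ^{hk}(x))D_k\uu(x)$. Since $Q^{hk}\in W^{1,\infty}(\Rd)\subset BUC(\Rd)$ and $V$ is symmetric (so that \eqref{ana_V} holds with $\omega_0=0$, and Hypothesis~\ref{H1-1} or \ref{hyp_2} is still in force), Theorem~\ref{main} applies to $\A^0-V$ and gives that $A_p:=A_p^0-V_p$, with domain $\{\uu\in W^{2,p}(\Rd;\Cm):V\uu\in L^p(\Rd;\Cm)\}$, generates a strongly continuous analytic semigroup and satisfies \eqref{norm_eq}. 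As $\mathcal B$ has bounded coefficients, an interpolation inequality for first-order derivatives combined with \eqref{norm_eq} shows that $\mathcal B$ is $A_p$-bounded with $A_p$-bound $0$; hence \cite[Theorem 2.10]{engnagel} yields that $L_p=A_p+\mathcal B$ generates a strongly continuous analytic semigroup with the same domain, and the open mapping theorem gives \eqref{norm_eq} for $L_p$ as in Step~2 of the proof of Theorem~\ref{main}. Finally, rerunning the homotopy argument of Proposition~\ref{prop-4.2} along the divergence-form operators $\sum_{h,k}D_h(Q^{hk}_tD_k\cdot)-V$ with $Q^{hk}_t=tQ^{hk}+(1-t)\delta_{hk}I$ (the extra term $t\mathcal B$ being uniformly small and continuous in $t$), one obtains that the resolvents $R(\lambda,L_p)$ and $R(\lambda,L_q)$ are consistent on $L^p(\Rd;\Cm)\cap L^q(\Rd;\Cm)$ for all $p,q\in(1,\infty)$, and that $C^\infty_c(\Rd;\Cm)$ is a core for each $L_p$, as in the Proposition following Theorem~\ref{main}.

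The second ingredient is the identity $(L_p)^*=L_{p'}$ for every $p\in(1,\infty)$, taken with respect to the bilinear pairing $\gen{\uu,\vv}=\int_{\Rd}\sum_{j=1}^m u_j(x)v_j(x)\,dx$ between $L^p(\Rd;\Cm)$ and $L^{p'}(\Rd;\Cm)$. Because $\bm{\mathcal L}$ is in divergence form and $V$ is symmetric, $\bm{\mathcal L}$ is formally self-adjoint, so integration by parts gives $\gen{\bm{\mathcal L}\uu,\vv}=\gen{\uu,\bm{\mathcal L}\vv}$ for $\uu,\vv\in C^\infty_c(\Rd;\Cm)$; since $C^\infty_c(\Rd;\Cm)$ is a core for both $L_p$ and $L_{p'}$ and $(L_p)^*$ is closed, this yields $L_{p'}\subseteq(L_p)^*$. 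As $L^{p'}(\Rd;\Cm)$ is reflexive, the adjoint of the semigroup generated by $L_p$ is strongly continuous with generator $(L_p)^*$; being a generator extending the generator $L_{p'}$, it must equal $L_{p'}$. Hence $\rho(L_{p'})=\rho(L_p)$, $\sigma(L_{p'})=\sigma(L_p)$, and $R(\lambda,L_{p'})=R(\lambda,L_p)^*$ for $\lambda\in\rho(L_p)$.

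To conclude, fix $q$ in the interval with endpoints $p$ and $p'$ and $\lambda\in\rho(L_p)=\rho(L_{p'})$. The operators $R(\lambda,L_p)$ and $R(\lambda,L_{p'})$ are consistent and bounded on $L^p(\Rd;\Cm)$ and $L^{p'}(\Rd;\Cm)$, so by the Riesz--Thorin theorem they induce a bounded operator $S_\lambda$ on $L^q(\Rd;\Cm)$ which agrees with $R(\lambda,L_p)$ on the dense subspace $L^p(\Rd;\Cm)\cap L^{p'}(\Rd;\Cm)$. Using the resolvent identity for $R(\cdot,L_p)$ on that subspace one checks that $(S_\lambda)_{\lambda\in\rho(L_p)}$ is a pseudo-resolvent on $L^q(\Rd;\Cm)$; since $\rho(L_q)$ contains a real half-line and, for $\mu$ large in $\rho(L_q)\cap\rho(L_p)$, the consistency of the resolvents forces $S_\mu=R(\mu,L_q)$, the operator $S_\mu$ is injective with dense range $D(L_q)$. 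The constancy of kernel and range for pseudo-resolvents then shows that $S_\lambda$ is injective with range $D(L_q)$ for every $\lambda\in\rho(L_p)$, so $(S_\lambda)$ is the resolvent family of a closed, densely defined operator which, coinciding with $R(\cdot,L_q)$ at $\mu$, equals $L_q$. Therefore $\rho(L_p)=\rho(L_{p'})\subseteq\rho(L_q)$, that is $\sigma(L_q)\subseteq\sigma(L_p)$.

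The main obstacle is the duality identity $(L_p)^*=L_{p'}$: it is where the divergence structure and the symmetry of $V$ are genuinely exploited, and it relies on the first part of the proof guaranteeing that $L_{p'}$ is already a generator whose domain is the minimal one, which is what forces the a priori larger operator $(L_p)^*$ to coincide with it. The bounded-perturbation step, the adaptation of the consistency proof of Proposition~\ref{prop-4.2} to the presence of the first-order terms, and the pseudo-resolvent bookkeeping are routine.
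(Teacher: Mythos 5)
Your proposal is correct, and its first half (the non-divergence rewriting, the interpolation inequality showing that $\sum_{h,k}D_hQ^{hk}D_k$ is $A_p$-bounded with bound zero, and the appeal to the bounded-perturbation theorem) coincides with the paper's argument. For the spectral inclusion the architecture is also the same — the equality $\rho(L_p)=\rho(L_{p'})$ by duality, consistency of the resolvents on the $L^r$-scale, and Riesz--Thorin interpolation of $R(\lambda,L_p)$ and $R(\lambda,L_{p'})$ to a bounded operator on $L^q(\Rd;\Cm)$ — but the final verification differs. The paper checks by hand that the interpolated operator $R_\lambda$ inverts $\lambda-L_q$: existence is proved first for $\f\in C^\infty_c(\Rd;\Cm)$ via H\"older's inequality and then for general $\f$ by density and closedness of $L_q$, while uniqueness is obtained by the bootstrap argument of Proposition \ref{prop-4.3} showing that an eigenvalue of $L_q$ would be an eigenvalue of $L_p$. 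You instead verify that the interpolated family $(S_\lambda)_{\lambda\in\rho(L_p)}$ is a pseudo-resolvent, identify it with $R(\cdot,L_q)$ at one point $\mu$ of the (nonempty) intersection $\rho(L_p)\cap\rho(L_q)$, and invoke the constancy of kernel and range of pseudo-resolvents; this cleanly bypasses the bootstrap uniqueness step at the cost of the pseudo-resolvent bookkeeping. You are also more explicit than the paper on the duality input, proving $(L_p)^*=L_{p'}$ from formal self-adjointness and the core property of $C^\infty_c(\Rd;\Cm)$ (this is exactly where the divergence form and the symmetry of $V$ enter), whereas the paper simply cites \cite[Appendix B, Corollary B.12]{engnagel}; likewise your remark that the consistency statement of Proposition \ref{prop-4.2} must be rerun for the operators $L_r$ (not just $A_r$) addresses a point the paper glosses over. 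Both routes are sound.
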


\begin{proof}
Since the diffusion coefficients belong to $W^{1,\infty}(\Rd)$, we can rewrite the operator $\mathcal L$ in the form
\begin{eqnarray*}
\mathcal L\uu=\sum_{h,k=1}^dQ^{hk}D_{hk}\uu+\sum_{h,k=1}^dD_hQ^{hk}D_k\uu-V\uu=\A\uu+\sum_{h,k=1}^dD_hQ^{hk}D_k\uu
\end{eqnarray*}
on smooth enough functions $\uu$. Note that
\begin{align*}
\bigg\|\sum_{h,k=1}^dD_hQ^{hk}D_k\uu\bigg\|_{L^p(\Rd;\Cm)}
\le &\sum_{h,k=1}^d\|D_hQ^{hk}\|_{\infty}\|D_k\uu\|_{L^p(\Rd;\Cm)}\\
\le &C\sum_{k=1}^d\|D_k\uu\|_{L^p(\Rd;\Cm)}\\
\le &C'\|\uu\|_{L^p(\Rd;\Cm)}^{\frac{1}{2}}
\|\uu\|_{W^{2,p}(\Rd;\Cm)}^{\frac{1}{2}}\\
\le &C''\|\uu\|_{L^p(\Rd;\Cm)}^{\frac{1}{2}}
\|\uu\|_{D(A_p)}^{\frac{1}{2}}\\
\le &\varepsilon \|\A\uu\|_{L^p(\Rd;\Cm)}
+C_{\varepsilon}\|\uu\|_{L^p(\Rd;\Cm)}
\end{align*}
for every $\varepsilon>0$ and some positive constants $C$, $C'$, $ C''$ and $C_{\varepsilon}$, this latter one blowing up as $\varepsilon$ tends to $0$.

By Theorem \ref{main}, the realization $A_{p}$ of operator $\A$, with domain $D(A_p)=D(L_p)$, generates a strongly continuous analytic semigroup in $L^p(\Rd;\Cm)$. Moreover, the above computations show that the operator $\sum_{h,k=1}^dD_hQ^{hk}D_k$ is a $A_{p}$-bounded perturbation with $A_{p}$-bound equal to zero. From these two properties it follows  that the operator $L_p$ generates a strongly continuous  analytic semigroup (see \cite[III, Theorem 2.10]{engnagel}).

To prove the last statement it is enough to consider  $p\in (1,2)$ and $q\in (p,p')$ and show that $\rho(L_p)\subseteq \rho(L_q)$. For this purpose, let us fix  $\lambda \in \rho(L_p)$ ($=\rho(L_{p'})$) (see e.g., \cite[Appendix B, Corollary B.12]{engnagel}) and observe that the consistency of the semigroup $(T(t))$ on the $L^r$-scale ($r\in (1,\infty)$), proved in
Proposition \ref{prop-4.2}, shows that $R(\lambda,L_p)$
and $R(\lambda,L_{p'})$, which are bounded operators in $L^{p}(\Rd;\Cm)$ and in $L^{p'}(\Rd;\Cm)$, respectively, coincide on $L^p(\Rd;\Cm)\cap L^{p'}(\Rd;\Cm)$. By the Riesz-Thorin interpolation theorem, we can infer that
$R(\lambda,L_p)$ can be extended with a bounded operator $R_{\lambda}$ in $L^q(\Rd;\Cm)$. We claim that, for every $\f\in L^q(\Rd;\Cm)$, the function $R_{\lambda}\f$ is the unique solution to the equation $\lambda\uu-L_q\uu=\f$ which belongs to $D(A_q)$. As far as the uniqueness of the solution is concerned, we
use the same bootstrap argument as in the proof of Proposition \ref{prop-4.3}  to infer that, if $\lambda$ is an eigenvalue of $L_q$, i.e., if the equation $\lambda\uu-L_q\uu=\bm{0}$ admits a nontrivial solution in $D(A_q)$, then $\lambda\in\sigma(L_p)$, which is a contradiction. Showing that $R_{\lambda}\f$ is a solution to the previous equation in $D(A_q)$ is easy to prove if $\f\in C^{\infty}_c(\Rd;\Cm)$. Indeed, in this case $\uu=R_{\lambda}\f=R(\lambda,L_p)\f=R(\lambda,L_{p'})\f$ and, therefore, $\uu\in W^{2,p}(\Rd;\Cm)\cap W^{2,p'}(\Rd;\Cm)$ and $V\uu\in L^p(\Rd;\Cm)\cap L^{p'}(\Rd;\Cm)$. H\"older inequality shows that $\uu\in W^{2,q}(\Rd;\Cm)$ and $V\uu\in L^q(\Rd;\Cm)$. Therefore, $\uu\in D(A_q)$ and, clearly, it solves the equation $\lambda\uu-A_q\uu=\f$.

For a general $\f\in L^q(\Rd;\Cm)$, we consider a sequence $(\f_n)\subset C^{\infty}_c(\Rd;\Cm)$ converging to $\f$ in $L^q(\Rd;\Cm)$.
Then, $\uu_n=R_{\lambda}\f_n$ converges in $L^q(\Rd;\Cm)$ to the function $\uu=R_{\lambda}\f$. By difference, $L_q\uu_n=\lambda\uu_n-\f_n$ converges in $L^q(\Rd;\Cm)$. Since $L_q$ is a closed operator, it follows that $\uu\in D(A_q)$ and $A_q\uu=\lambda\uu-\f$. We have so proved that $\lambda\in\rho(L_q)$ and the inclusion $\rho(L_p)\subset\rho(L_q)$ follows.
\end{proof}

\section{Examples}
Here, we exhibit a class of operators defined as in \eqref{operator} to which our results can be applied.
\subsection{On the diffusion matrices $Q^{hk}$}
Concerning the diffusion matrices $Q^{hk}$, we recall that they have to satisfy Hypothesis \ref{H1}. Thus, we can consider, for instance, the following situation.

For any $h,k=1, \ldots, d$, the diffusion matrices $Q^{hk}$ can  have the form
\begin{equation*}
Q^{hk}= q_{hk}Q_0+A^{hk}
\end{equation*}
where both $q_{hk}:\Rd\to \R$ and the entries of the $m\times m$ matrix-valued functions $A^{hk}$ belong to $BUC(\Rd;\R)$, and  $Q_0$ is a $m\times m$ real valued matrix that satisfies the condition $(Q_0 \eta, \eta)\ge q_0|\eta|^2$ for any $\eta \in \R^m$ and some positive constant $c_0$.
Setting $Q=(q_{hk})_{h,k=1}^d$, we  assume that there exists a positive function $\lambda_Q:\Rd\to \R$ such that $(Q(x)\xi,\xi)\ge \lambda_Q(x)|\xi|^2$ for any $x,\xi\in \Rd$ and
\begin{eqnarray*}
\inf_{x \in \Rd}\left(q_0\lambda_Q(x)-dm\max_{i,j=1,\ldots, m}\max_{h,k=1, \ldots,d}|A^{hk}_{ij}(x)|\right)= :c_1>0.
\end{eqnarray*}

First, we note that the Legendre-Hadamard ellipticity condition in Hypothesis \ref{H1} is equivalent to the existence of a positive  constant $C$ such that
\begin{equation*}
\sum_{h,k=1}^d\sum_{i,j=1}^mq^{hk}_{ij}\eta_i\eta_j\xi_h\xi_k\ge C|\eta|^2|\xi|^2,\qquad\;\,\xi\in\Rd,\;\,\eta\in\R^m.
\end{equation*}

 In this case it holds
\begin{align*}
\sum_{i,j=1}^m\sum_{h,k=1}^d q^{hk}_{ij}(x)\eta_i\eta_j\xi_k\xi_h=& (Q_0\eta, \eta)(Q(x)\xi,\xi)+\sum_{i,j=1}^m\sum_{h,k=1}^d A^{hk}_{ij}(x)\eta_i\eta_j\xi_k\xi_h\\
\ge & q_0\lambda_Q(x)|\eta|^2|\xi|^2-\max_{i,j=1,\ldots, m}\max_{h,k=1, \ldots,d}|A^{hk}_{ij}(x)|\sum_{i,j=1}^m\sum_{h,k=1}^d|\xi_h||\xi_k||\eta_i||\eta_j|\\
\ge & q_0\lambda_Q(x)|\eta|^2|\xi|^2-dm\max_{i,j=1,\ldots, m}\max_{h,k=1, \ldots,d}|A^{hk}_{ij}(x)||\xi|^2|\eta|^2\\
\ge &c_1|\xi|^2|\eta|^2
\end{align*}
for any $\xi\in \Rd$ and $\eta\in \Rm$, hence condition \eqref{son} is satisfied with $C=c_1$.

\subsection{On the potential matrix $V$: the case when Hypotheses \ref{H1-1} are satisfied}\label{funcf}
\begin{example}\label{ex_1} {\rm
Let  $f$ be a function belonging to $W^{2,\infty}_{{\rm loc}}(\Rd)$, with positive infimum  and such that, for some $\alpha\in(0,1)$,  $f^{-\alpha}D_kf$ and $f^{-\alpha}D_{hk}f$ belong to $L^\infty(\Rd)$ for every $h,k\in\{1, \dots, d\}$.
For example, one can consider $f(x)=p(|x|)$ for every $x\in\Rd$, where $p:\R\rightarrow \R$ is a polynomial with positive infimum without first grade term, or $f(x)=(1+|x|^2)^r$ for every $x\in\Rd$, with $r>0$, or, again, $f(x)=|x|^2\log|x|$ outside a ball centered in the origin.

Next, consider the matrix valued-function $V:\Rd\to\R^{m^2}$, defined by
\begin{equation*}
V(x)= f(x)V_0, \qquad\;\, x \in \Rd,
\end{equation*}
where   $V_0$ is a $m\times m$ real-matrix which satisfies one of the following two conditions:
\begin{enumerate}[\rm (a)]
\item
$V_0$ is a symmetric and positive definite matrix;
\item
$V_0$ is a diagonal perturbation of a skewsymmetric real matrix,i.e.
\begin{eqnarray*}
V_0={\rm diag}(v_{11},\ldots, v_{mm})+\tilde{V_0},
\end{eqnarray*}
with $v_{ii}>0$ for any $i=1, \ldots,m$  and $\tilde{V_0}$ is a skewsymmetric matrix with entries $v_{ij}$ with $i,j=1, \ldots, m$, $i\neq j$.
\end{enumerate}

If condition (a) is satisfied  then estimate \eqref{hyp-c0} holds true with $c_0=\lambda_0\inf_{x\in\Rd}f(x)$, where $\lambda_0$ denotes the minimum eigenvalue of the matrix $V_0$. Further, condition \eqref{ana_V} is satisfied as well, since ${\rm Im}( V(x)\zeta,\zeta)=0$ for any $x\in \Rd$ and $\zeta \in \Cm$. Analogously, if we assume that condition (b) is true, then \eqref{hyp-c0} is satisfied with $c_0= \left (\min_{i=1, \ldots,m}v_{ii}\right )\inf_{x\in\Rd}f(x)$. Concerning estimate \eqref{ana_V}, we  observe that, for any $x \in \Rd$ and any $\eta=(\eta_1, \ldots, \eta_m)\in \Cm$, it follows that
\begin{align*}
|{\rm Im}(V(x)\eta,\eta)|
&\le f(x)\bigg |\sum_{i=1}^m\sum_{j\in \{1,\ldots, m\}\setminus{i}}v_{ij}\eta_j\overline{\eta}_i\bigg |\\
& \le \frac{1}{2}f(x)\sum_{i=1}^m\sum_{j\in \{1,\ldots, m\}\setminus{i}}|v_{ij}|(|\eta_i|^2+|\eta_j|^2)\\
& \le f(x)\sum_{i=1}^m|\eta_i|^2\sum_{j\in \{1,\ldots, m\}\setminus{i}}|v_{ij}|\\
& \le c_3 f(x)\sum_{i=1}^mv_{ii}|\eta_i|^2\\
&= c_3 {\rm Re}(V(x)\eta,\eta),
\end{align*}
where $c_3$ is a positive constant such that $\sum_{j\in \{1,\ldots, m\}\setminus{i}}|v_{ij}|\le c_3 v_{ii}$ for every $i=1,\ldots,m$. Estimate \eqref{ana_V} is satisfied with $\omega_0=\arctan(c_3)$.

Clearly, the matrix-valued functions $D_kV V^{-\alpha}$ and $D_{hk}V V^{-\alpha}$ are bounded in $\Rd$ for every $h,k=1,\ldots,d$. Hence, the second part of Hypotheses \ref{H1-1}(ii) holds true.

Finally, concerning the first part of Hypotheses \ref{H1-1}(ii), i.e., the condition $Q^{hk}V=VQ^{hk}$, is satisfied if
$V_0$ commute with the matrices $Q_0$ and $A^{hk}$ for any $h,k=1,\ldots,d$.

\begin{rmk}
{\rm We point out, that in the weakly coupled case (i.e., when $Q^{hk}=q_{hk}I$ for every $h,k=1,\ldots,d$),  the  condition $Q^{hk}V=VQ^{hk}$ is trivially satisfied for every $h,k=1,\ldots,d$. and the potential matrix $V$ has entries that can grow more than quadratically, which was the best case considered in \cite{KLMR}.}
\end{rmk}}
\end{example}

Now, we provide examples of potential matrices $V$ that can have a more general form according to Hypotheses \ref{hyp_2}.
\subsection{On the potential matrix V: the case when Hypotheses \ref{hyp_2} are satisfied}

\begin{example}\label{ex_2}
{\rm

Fix $r>0$ and assume that
\begin{equation}\label{ch_1}
v_{ij}(x)=g_{ij}(x)(1+|x|^2)^{\beta_{ij}},\qquad\;\, x \in \Rd,\;\, i\neq j,
\end{equation}
and
\begin{equation}\label{ch_2}
v_{ii}(x)=(1+|x|^2)^r+g_i(x)(1+|x|^2)^{\beta_i},\qquad\,\, x \in \Rd,\;\, i=1,\ldots,m,
\end{equation}
with $g_i, g_{ij} \in
C_b(\Rd)$ and  $\beta_i, \beta_{ij}\in(0,r)$ and $v(x)=(1+|x|^2)^r$ for every $x\in\Rd$.
Then Hypotheses \ref{hyp_2} are  satisfied with $\alpha \in (1-\frac{1}{2r},1)$ and
$\gamma=r^{-1}\beta$, where $\beta=\max\{\beta_i, \beta_{i,j}: i,j=1,\ldots,m,\ j\neq i\}$.}
\end{example}

Now, let us observe that both in Example \ref{ex_1}(a), if the function $f$ diverges to $\infty$ as $|x|$ tends to $\infty$, and in Example \ref{ex_2}, there exists a function $\psi$ diverging to $\infty$ as $|x|$ tends to $\infty$ such that

\begin{equation}\label{comp_cond}
|V(x)\eta|\ge\psi(x)|\eta|,\qquad\;\, x \in \Rd,\;\, \eta \in \Rm.
\end{equation}
Thus, Theorem \ref{thm-comp}
can be applied to deduce that:\\
(i) the spectrum of the operator $A_p$ is independent of $p\in (1,\infty)$ and consists of eigenvalues only,\\
(ii) the operator $T(t)$ is  compact in $L^p(\Rd;\Rm)$ for any $t>0$.

Indeed, in Example \ref{ex_1}, if the function $f$ diverges to $\infty$ as $|x|$ tends to $\infty$, it can be easily shown that
condition \eqref{comp_cond} is satisfied with $\psi(x)=\lambda_0 f(x)$ and, clearly, $\psi(x)$ diverges to $\infty$ as $|x|$ tends to $\infty$.
On the other hand, in Example \ref{ex_2}, with the choices in \eqref{ch_1} and \eqref{ch_2},  we can estimate
\begin{align}
|V(x)\eta|^2
=& \sum_{i=1}^m v_{ii}^2\eta_i^2+\sum_{i=1}^m\bigg(\sum_{j\in\{1,\ldots,m\}\setminus\{i\}} v_{ij}\eta_j\bigg)^2
+2\sum_{i=1}^mv_{ii}\eta_i\sum_{j\in\{1,\ldots,m\}\setminus\{i\}}v_{ij}\eta_j\notag\\
\ge &
\sum_{i=1}^m v_{ii}^2\eta_i^2
-2\sum_{i=1}^m\sum_{j\in\{1,\ldots,m\}\setminus\{i\}}|v_{ii}||v_{ij}||\eta_i||\eta_j|\notag\\
\ge & \sum_{i=1}^m v_{ii}^2\eta_i^2-
\sum_{i=1}^m\sum_{j\neq i}|v_{ii}||v_{ij}|(\eta_i^2+\eta_j^2).
\label{starrrr}
\end{align}
for any $\eta\in\Rm$.

Now, if $\mathfrak{g}=\max_{i,j=1,\ldots,m, i\neq j}\{\|g_i\|_{\infty},\|g_{ij}\|_{\infty}\} $, then we can estimate
\begin{equation*}
|v_{ii}(x)|\le (1+|x|^2)^r\big (1+\|g_i\|_{\infty}(1+|x|^2)^{\beta_i-r})
\le (1+\mathfrak{g})(1+|x|^2)^r
\end{equation*}
and $|v_{ij}(x)|\le \mathfrak{g}(1+|x|^2)^\beta$ for every $x \in \Rd$. Thus, $|v_{ij}(x)||v_{ii}(x)| \le \mathfrak{g}(1+\mathfrak{g})(1+|x|^2)^{\beta+r} $ for every $x \in \Rd$. Consequently,
\begin{align}\label{pippo1}
\sum_{i=1}^m\sum_{j\neq i}|v_{ii}||v_{ij}|(\eta_i^2+\eta_j^2)&\le  \mathfrak{g}(1+\mathfrak{g})(1+|x|^2)^{\beta+r}\sum_{i=1}^m\sum_{j\neq i}(\eta_i^2+\eta_j^2)\notag \\
&\le 2m\mathfrak{g}(1+\mathfrak{g})(1+|x|^2)^{\beta+r}|\eta|^2
\end{align}
for every $x\in\Rd$ and $\eta\in\Rm$.
Moreover,
\begin{equation}
\label{pippo}
|v_{ii}(x)|
\ge (1+|x|^2)^r\big (1-\mathfrak{g}(1+|x|^2)^{\beta-r})\ge \frac{1}{2}(1+|x|^2)^r,\qquad\;\,x\in\Rd\setminus B(k),\;\,i=1,\ldots,m,
\end{equation}
for a suitable positive radius $k$.
Therefore, using \eqref{starrrr}, \eqref{pippo1} and \eqref{pippo} we conclude that
\begin{align*}
|V(x)\eta|^2\ge (1+|x|^2)^{2r}\bigg [\frac{1}{4}-2m\mathfrak{g}(1+\mathfrak{g})(1+|x|^2)^{\beta-r}\bigg ]|\eta|^2
\ge \frac{1}{8}(1+|x|^2)^{2r}|\eta|^2
\end{align*}
for every $x\in\Rd\setminus B(k)$ and $\eta\in\Rm$, up to replacing $k$ with a larger radius if needed.
Consequently, \eqref{comp_cond} holds true with
$\psi(x)=(\sqrt{8})^{-1}
(1+|x|^2)^{r}\chi_{\Rd\setminus B(k)}(x)$ for every $x\in\Rd$,
which blows up as $|x|$ tends to $\infty$.

\end{document}